\DeclareMathAlphabet{\cmcal}{OMS}{cmsy}{m}{n}
\newtheoremstyle{thm}
  {3pt}
  {3pt}
  {\em}
  {0pt}
  {\bfseries}
  {}
  {5pt}
  {}
\newtheoremstyle{rem}
  {3pt}
  {3pt}
  {}
  {0pt}
  {\bfseries}
  {.}
  {5pt}
  {}
\newtheorem{thm}{Theorem}[section]
\newtheorem{lem}[thm]{Lemma}
\newtheorem{prop}[thm]{Proposition}
\newtheorem{conj}[thm]{Conjecture}
\theoremstyle{definition}
\newtheorem{defn}[thm]{Definition}
\theoremstyle{rem}
\newtheorem{rem}[thm]{{Remark}}
\numberwithin{equation}{section} \numberwithin{table}{section}
\newtheorem*{thm*}{Theorem}
\newtheorem*{rem*}{Remark}
\newtheorem*{rems*}{Remarks}
\newtheorem*{exam*}{Example}
\newtheorem*{exams*}{Examples}
\newcommand{\neutralize}[1]{\expandafter\let\csname c@#1\endcsname\count@}
\def\bos#1{{\mathbf{#1}}}
 \newcommand{\G}{GL_2^{+}(\mathbb{Q})}
 \newcommand{\pa}[1]{\frac{\partial}{\partial{#1}}}
  \newcommand{\prt}[2]{\frac{\partial{#1}}{\partial{#2}}}
  \newcommand{\pr}{\operatorname{prim}}
  \newcommand{\nc}{\newcommand}
  \newcommand{\be}{\begin{eqnarray*}}
  \newcommand{\ee}{\end{eqnarray*}}
  \newcommand{\bea}{\begin{eqnarray}}
  \newcommand{\eea}{\end{eqnarray}}
   \nc{\bei}{\begin{itemize}}
   \nc{\eei}{\end{itemize}}
   \nc{\bee}{\begin{enumerate}}
   \nc{\eee}{\end{enumerate}}
   \nc{\bet}{\begin{thm}}
   \nc{\eet}{\end{thm}}
   \nc{\bed}{\begin{defn}}
   \nc{\eed}{\end{defn}}
   \nc{\bel}{\begin{lem}}
   \nc{\eel}{\end{lem}}
   \nc{\bep}{\begin{prop}}
   \nc{\eep}{\end{prop}}
   \nc{\bec}{\begin{corollary}}
   \nc{\eec}{\end{corollary}}
   \nc{\ber}{\begin{rem}}
   \nc{\eer}{\end{rem}}
   \nc{\beex}{\begin{example}}
   \nc{\eeex}{\end{example}}
   \nc{\bpm}{\begin{pmatrix}}
   \nc{\epm}{\end{pmatrix}}
   \nc{\bspm}{\left(\begin{smallmatrix}}
   \nc{\espm}{\end{smallmatrix}\right)}
\newcommand{\cA}{\mathcal{A}}
\newcommand{\cK}{\mathcal{K}}
\newcommand{\bC}{\mathbb{C}}
\newcommand{\bQ}{\mathbb{Q}}
\newcommand{\bZ}{\mathbb{Z}}
\newcommand{\BP}{\mathbf{P}}
\nc{\frf}{\mathfrak{f}} 
\nc{\frs}{\mathfrak{s}}  
\nc{\frt}{\mathfrak{t}} 
\nc{\fru}{\mathfrak{u}}
\nc{\lsl}{\mathfrak{sl}}
\nc{\lgl}{\mathfrak{gl}}
\nc{\upsi}{\underline{\psi}}
\nc{\uchi}{\underline{\chi}}
\DeclareMathOperator{\Spec}{Spec}
\DeclareMathOperator{\Res}{Res}
\newcommand{\lra}{\longrightarrow}    
\nc{\surjto}{\twoheadrightarrow}
\nc{\ts}{\times}
\nc{\ds}{\displaystyle}
\nc{\nd}{\noindent}  
\nc{\ud}{\underline}
\nc{\ov}{\overline}
\nc{\maplra}[1]{\buildrel #1 \over \lra}
\nc{\mapto}[1]{\buildrel #1 \over \to}
\nc{\setb}[1]{\{  #1\}}
 \nc{\cHom}{\mathcal{H}om}
\nc{\cdruur}[8] {\begin{CD} 
#1 @>#2>> #3\\ 
@AA#4A @AA#5A\\ 
#6 @>#7>> #8 
\end{CD} }
\nc{\cdrddr}[8] {\begin{CD} 
#1 @>#2>> #3\\ 
@VV#4V @VV#5V\\ 
#6 @>#7>> #8 
\end{CD} }
\nc{\dia}[8]{\xymatrix{ 
&#1 \ar@{-}[ld]_{#2} \ar@{-}[rd]^{#3} \\
#4 \ar@{-}[rd]_{#6} & &#5 \ar@{-}[ld]^{#7}\\ 
&#8} }
\nc{\diam}[9]{\xymatrix{ 
&#1 \ar@{-}[ld]_{#2}  \ar@{-}[d]^{#3} \ar@{-}[rd]^{#4} \\
#5 \ar@{-}[rd]_{#8}     & #6 \ar@{-}[d]_{#9}      & #7   \ar@{-}[ld]^{2} \\
& \bQ} } 
\nc{\sumn}[2][n]{#2_{1} +#2_{2}+ \cdots + #2_{#1}}
\nc{\poly}[3][n]{#2_{#1}#3^{#1} +#2_{#1-1}#3^{#1-1}  \cdots + #2_{1} #3+ #2_0}
\nc{\dpoly}[3][n]{#1#2_{#1}#3^{#1-1} +(#1-1)#2_{#1-1}#3^{#1-1}  \cdots +2 #2_{2} #3+ #2_1}
\nc{\mpoly}[3][n]{#3^{#1} +#2_{#1-1}#3^{#1-1}  \cdots + #2_{1} #3+ #2_0}
\nc{\vpar}[4]{    \left \{ \begin{array}{cc} #1 & \textrm{if } #2, \\
&\\
#3 & \textrm{if } #4. 
\end{array}\right. }
\nc{\ary}[5]{#1: \left\{ \begin{array}{ll} #2 &\mapsto #3 \\ #4 &\mapsto #5 \end{array} \right.}
 \nc{\bedm}{\begin{displaymath}}
 \nc{\eedm}{\end{displaymath}}
 \nc{\art}{\hbox{\bf Art}^\Z}
 \nc{\bvx}{\bos{B\!\!V}_{\! \!\ud G}}
\newcommand{\pmat}{\left(\begin{matrix}}   
\newcommand{\epmat}{\end{matrix}\right)}   
\newcommand{\psmat}{\left(\begin{smallmatrix}}    
\newcommand{\epsmat}{\end{smallmatrix}\right)}
\nc{\twotwo}[4]{\pmat #1 & #2 \\ #3 & #4 \epmat}
\nc{\thrthr}[9]{\pmat #1 & #2 & #3 \\ #4 & #5 & #6 \\ #7 & #8 & #9 \epmat}
\nc{\stwotwo}[4]{\psmat #1 & #2 \\ #3 & #4 \epsmat}
\nc{\sthrthr}[9]{\psmat #1 & #2 & #3 \\ #4 & #5 & #6 \\ #7 & #8 & #9 \epsmat}
\def\eqalign#1{\null\,\vcenter{\openup\jot\m@th
\ialign{\strut\hfil$\displaystyle{##}$&$\displaystyle{{}##}$\hfil
\crcr#1\crcr}}\,}
\def\eqn#1#2{
\xdef #1{(\nsecsym\the\meqno)}
\global\advance\meqno by1
$$#2\eqno#1\eqlabeL#1
$$}
\def\a{\alpha}
\def\b{\beta}
  \def\G{\Gamma}
\def\l{\lambda}  
\def\m{\mu}
\def\C{\mathbb{C}}
\def\Z{\mathbb{Z}}
\def\mod{\hbox{ }mod\hbox{ }}
\def\wt{\hbox{\it wt}}
\def\ch{\hbox{\it ch}}
\def\newsec#1{\global\advance\nsecno by1
\eqnres@t
\section{#1}}
\def\eqnres@t{\xdef\nsecsym{\the\nsecno.}\global\meqno=1}
\def\sequentialequations{\def\eqnres@t{\bigbreak}}\xdef\nsecsym{}
\def\draftmode{\message{ DRAFTMODE }

{\count255=\time\divide\count255 by 60 \xdef\hourmin{\number\count255}
\multiply\count255 by-60\advance\count255 by\time
\xdef\hourmin{\hourmin:\ifnum\count255<10 0\fi\the\count255}}}
\def\nolabels{\def\wrlabeL##1{}\def\eqlabeL##1{}\def\reflabeL##1{}}
\def\writelabels{\def\wrlabeL##1{\leavevmode\vadjust{\rlap{\smash%
{\line{{\escapechar=` \hfill\rlap{\tt\hskip.03in\string##1}}}}}}}%
\def\eqlabeL##1{{\escapechar-1\rlap{\tt\hskip.05in\string##1}}}%
\def\reflabeL##1{\noexpand\llap{\noexpand\sevenrm\string\string\string##1}
}}
\def\eqn#1#2{
\xdef #1{(\nsecsym\the\meqno)}
\global\advance\meqno by1
$$#2\eqno#1\eqlabeL#1
$$}
\def\eqalign#1{\null\,\vcenter{\openup\jot\m@th
\ialign{\strut\hfil$\displaystyle{##}$&$\displaystyle{{}##}$\hfil
\crcr#1\crcr}}\,}
   \nc{\hr}{[\![\hbar]\!]}
\nc{\bt}{\mathbf{t}}
\begin{document}

\title[Cohomology of complete intersections over a characteristic zero field]{A basis of algebraic de Rham cohomology of complete intersections over a characteristic zero field}%

\author{Jeehoon Park}
\email{jeehoonpark@postech.ac.kr}
\address{Department of Mathematics, POSTECH (Pohang University of Science and Technology), San 31, Hyoja-Dong, Nam-Gu, Pohang, Gyeongbuk, 790-784, South Korea. }
\author{Junyeong Park}
\email{junyeongp@gmail.com}
\address{Department of Mathematics, POSTECH (Pohang University of Science and Technology), San 31, Hyoja-Dong, Nam-Gu, Pohang, Gyeongbuk, 790-784, South Korea. }
\subjclass[2010]{ 14M10, 14F40 (primary)  }

\keywords{projective smooth complete intersections, Gysin map, de Rham cohomology, residues.}

\begin{abstract}
Let $\Bbbk$ be a field of characteristic 0.
Let $X$ be a smooth complete intersection over $\Bbbk$ of dimension $n-k$ in the projective space $\BP^n_{\Bbbk}$, for given positive integers $n$ and $k$.
When $\Bbbk=\bC$, Terasoma (\cite{Ter90}) and Konno (\cite{Ko91}) provided an explicit representative (in terms of differential forms) of a basis for the primitive middle-dimensional algebraic de Rham cohomology $H_{dR,\pr}^{n-k}(X;\bC)$.
Later Dimca constructed another explicit representative of a basis of $H_{dR,\pr}^{n-k}(X;\bC)$ in \cite{Dim95}. 
Moreover, he proved that his representative gives the same cohomology class as the previous representative of Terasoma and Konno. The goal of this article is to examine the above two different approaches without assuming that $\Bbbk=\bC$ and provide a similar comparison result for any field $\Bbbk$. Dimca's argument depends heavily on the condition $\Bbbk=\bC$ and our idea is to find appropriate Cech-de Rham complexes and spectral sequences corresponding to those two approaches, which work without restrictions on $\Bbbk$.
\end{abstract}

\maketitle
\tableofcontents


\section{Introduction} \label{sec1}

Let $\Bbbk$ be a field of characteristic zero.
Let $n$ and $k$ be positive integers such that $n \geq k \geq 1$. 
Let $X_{\ud G}$ be a smooth complete intersection variety over $\bQ$ of dimension $n-k$ embedded in the projective space $\BP^{n}$ over $\Bbbk$.
We use $\underline x = [x_0: x_1: \cdots: x_n]$ as a projective coordinate of the projective $n$-space $\BP^n$ and let $G_1(\ud x), \cdots, G_k(\ud x)$ be defining homogeneous polynomials in $\Bbbk[\ud x]$ such that $\deg(G_i)=d_i$ for $i = 1, \cdots, k$.

The main object of study is the primitive middle-dimensional algebraic de Rham cohomology group $H_{dR,\pr}^{n-k}(X_{\ud G};\Bbbk)$.
For this we introduce new variables $y_1, \cdots, y_k$ corresponding to $G_1, \cdots, G_k$. Let $N=n+k+1$ and 
\begin{align}\label{fao}
A :=\Bbbk[y_1, \cdots, y_k, x_0, \cdots, x_n]= \Bbbk[q_\m]_{\m=1,\cdots, N}
\end{align}
where $q_1=y_1, \cdots, q_k=y_k$ and $q_{k+1}=x_0, \cdots, q_N=x_{n}$. Then consider the Dwork potential
\begin{align}\label{dpot}
S(\ud q) := \sum_{\ell=1}^k y_\ell \cdot G_{\ell}(\underline{x}).
\end{align}
When $\Bbbk=\bC$, Konno \cite{Ko91} described $H_{dR,\pr}^{n-k}(X_{\ud G};\bC)$ in terms of Jacobian ideal of $S$: he constructed an isomorphism\footnote{More precisely, the isomorphism is due to Griffiths in the hypersurface case, \cite{Gr69},  Terasoma in the equal degree complete intersection case, \cite{Ter90}, and Konno in the general case, \cite{Ko91}.}
$$
A_{c_{\ud G}}/A_{c_{\ud G}} \cap Jac(S) \simeq H_{dR,\pr}^{n-k}(X_{\ud G};\bC)
$$
where we refer to \eqref{gm} for detailed notations.

When $\Bbbk=\bC$, Dimca found another isomorphism (see subsection \ref{sec2.3} for our review) from $\left({A}/ Jac(S)\right)_{c_{\ud G}}$ to $H_{dR,\pr}^{n-k}(X_{\ud G};\bC)$ in \cite{Dim95}, which is more close to the spirit of Griffiths \cite{Gr69} than Konno by showing that his map (described in a different way) is, in fact, same as the (Terasoma's and) Konno's map via a new use of relative Bochner-Martinelli and Andreotti-Norguet integral formulas for the residue map $\Res_{\ud G}:H^{n+k-1}_{dR}(\BP^n \setminus X_{\ud G}; \bC) \xrightarrow{\sim} H^{n-k}_{dR, \operatorname{prim}}(X_{\ud G};\bC)$. The goal here is to prove a version of Dimca's result when $\bC$ is replaced by any field $\Bbbk$ of characteristic zero.

More precisely, Dimca's result \cite[Proposition 10]{Dim95} (see also Proposition \ref{dc}) can be stated as the commutativity of the (diamond) diagram below when $\Bbbk=\bC$:
$$
\begin{tikzcd}[column sep=tiny]
& H^n_{dR}(\BP^n \setminus D_{\ud G};\Bbbk) \ar[dr, "\delta_{\ud G}"] 
&
&[1.5em] \\
A_{c_{\ud G}}/A_{c_{\ud G}} \cap Jac(S)  \ar[ur, "d_{\ud G}"] \ar[dr, "k_{\ud G}"']
&
& H^{n+k-1}_{dR}(\BP^n\setminus X_{\ud G};\Bbbk) \ar[r,  "\Res_{\ud G}"]
& H^{n-k}_{dR,\pr}(X_{\ud G}; \Bbbk)\\
& H^{n+k-1}_{dR}(\BP(\CMcal{E}) \setminus X_S; \Bbbk) \ar[ur, "s^* "'] 
&
&
\end{tikzcd}
$$
where $s$ is any section to the natural projection map $pr_1:\BP(\CMcal{E}) \setminus X_S \to \BP^n\setminus X_{\ud G}$ and we refer to Definitions  \ref{kgk}, \ref{dgk} and \ref{delgk} for Terasoma-Konno's map $k_{\ud G},$ Dimca's map $d_{\ud G}$ and the natural epimorphism $\delta_{\ud G}$. Note that $k_{\ud G}$ and $s^*$ are isomorphisms but neither $d_{\ud G}$ nor $\delta_{\ud G}$ is an isomorphism unless $k =1$.
Our main theorem \ref{mthm} states that the above diagram commutes (up to a precise numerical factor). Dimca's argument depends heavily on the condition $\Bbbk=\bC$, since Dimca's proof relies on an explicit description (integral formulas) of $\Res_{\ud G}$. On the other hand, the residue map $\Res_{\ud G}: H^{n+k-1}_{dR}(\BP^n\setminus X_{\ud G};\Bbbk) \to H^{n-k}_{dR,\pr}(X_{\ud G}; \Bbbk)$ over a general $\Bbbk$ is defined to be the connecting homomorphism of the distinguished triangle \eqref{XG-dist-triangle} and we do not have an integral formula. So we need a new idea to prove a comparison result for a general field $\Bbbk$: we will find appropriate Cech-de Rham complexes and spectral sequences, which work without restrictions on $\Bbbk$, corresponding to Dimca's and Terasoma-Konno's maps.
 Our argument does not use $\Res_{\ud G}$: it involves direct computations in the level of Cech-de Rham cochains and has more algebraic nature using ``partition of unity''.

The paper consists of two sections. Section \ref{sec2} is a summary of Dimca's result over $\bC$ in terms of the language which is suitable for generalization to arbitrary field of characteristic zero. We set up the basic notations in subsection \ref{sec2.1}. In subsection \ref{sec2.2}, we review Terasoma-Konno's approach to cohomology of complete intersections. In subsection \ref{sec2.3}, we explain Dimca's approach and his comparison result.

Section \ref{sec3} is the main part of the paper. 
In subsection \ref{sec3.1}, we examine the Gysin exact sequence for cohomologies with coefficient $\Bbbk$. Subsection \ref{sec3.2} (respectively, \ref{sec3.3}) is devoted to study of Terasoma-Konno's approach (respectively, Dimca's approaoch) over $\Bbbk$. Finally, in subsection \ref{sec3.4}, we prove the main comparison theorem.

\subsection{Acknowledgement}
The work of Jeehoon Park was partially supported by BRL (Basic Research Lab) through the NRF (National Research Foundation) of South Korea (NRF-2018R1A4A1023590). 
Jeehoon Park thanks KIAS (Korea Institute for Advanced Study), where the part of work was done, for its hospitality.

\section{Dimca's comparison result over $\bC$} \label{sec2}

\subsection{A brief review over complex numbers $\bC$} \label{sec2.1}
We review Terasoma's and Konno's approaches to explicit descriptions of $H^{n-k}_{dR, \pr}(X_{\ud G};\bC)$. Their ideas are same, which are based on the Gysin exact sequence and ``the Cayley trick''. Terasoma's result covers the case when $d_1=\cdots=d_k$ and Konno's result works generally.

Let us briefly explain the Gysin exact sequence and ``the Cayley trick''. There is a long exact sequence, called the Gysin exact sequence:
{
\small{
\begin{eqnarray}\label{gy}
\cdots \to H^{n+k-1}_{dR}(\BP^n ; \bC) \to H^{n+k-1}_{dR}(\BP^n \setminus X_{\ud G};\bC) \xrightarrow{\Res_{\ud G}} H^{n-k}_{dR}(X_{\ud G};\bC) \xrightarrow{\operatorname{Gys}} H^{n+k}_{dR}(\BP^n;\bC)  \to \cdots
\end{eqnarray}
}}
where $\Res_{\ud G}$ is the residue map (see p.\,96 of \cite{Dim95}) and $\operatorname{Gys}$ is the Gysin map (the cup product with $k$-th wedge product of the fundamental K\"{a}hler 2-form on $X_{\ud G}$). This sequence gives rise to an isomorphism 
\begin{eqnarray}\label{residuemap}
\Res_{\ud G}:H^{n+k-1}_{dR}(\BP^n \setminus X_{\ud G}; \bC) \xrightarrow{\sim} H^{n-k}_{dR, \operatorname{prim}}(X_{\ud G};\bC),
\end{eqnarray}
where the primitive cohomology $H^{n-k}_{dR, \operatorname{prim}}(X_{\ud G};\bC)$ is defined to be the kernel of $\operatorname{Gys}$.
The Cayley trick is about translating a computation of the cohomology of the complement of a complete intersection into a computation of the cohomology of the complement of a hypersurface in a bigger space. Let $\CMcal{E}=\CMcal{O}_{\BP^n}(d_1) \oplus \cdots \oplus \CMcal{O}_{\BP^n}(d_k)$ be the locally free sheaf
of $\CMcal{O}_{\BP^n}$-modules with rank $k$. Let $\BP(\CMcal{E})$ be the projective bundle associated to $\CMcal{E}$ with fiber $\BP^{k-1}$ over $\BP^n$. Then $\BP(\CMcal{E})$ is the smooth projective toric variety with Picard group isomorphic to $\bZ^2$ whose (toric) homogeneous coordinate ring\footnote{This was already introduced in \eqref{fao}} is given by
\begin{eqnarray} \label{fa}
A:=A_{\BP(\CMcal{E})}:=\bC[y_1, \cdots, y_k, x_0, \cdots, x_n]
\end{eqnarray}
where $y_1, \cdots, y_k$ are new variables corresponding to $G_1, \cdots, G_k$. This is the polynomial ring introduced in \eqref{fao}.
There are two additive gradings $\ch$ and $\wt$, called the charge and the weight, corresponding to the Picard group $\bZ^2$: 
$$
\ch(y_i)=-d_i, \quad \text{for } i=1, \cdots, k, \quad \ch(x_{j}) = 1, \quad \text{for }  j=0, \cdots, n,
$$ 
$$
\wt(y_i)=1, \quad \text{for } i=1, \cdots, k, \quad \wt(x_{j}) = 0, \quad \text{for }  j=0, \cdots, n. 
$$ 
We have the weight and the charge decomposition of $A$ such that
\[
	A =\bigoplus_{\l \in \Z}\bigoplus_{w \geq 0}A_{\l, (w)}
\]
where subindices $\l$ and $(w)$ means the charge and the weight respectively.
Then 
$$
S(\ud y, \ud x):= \sum_{j=1}^k y_j G_j(\ud x) \in {A_{0,(1)}}
$$
defines a hypersurface $X_S$ in $\BP(\CMcal{E})$.
The natural projection map $pr_1:\BP(\CMcal{E}) \to \BP^n$ induces a morphism 
$
pr_1: \BP(\CMcal{E})\setminus X_S \to \BP^n \setminus X_{\ud G}
$
which can be checked to be a homotopy equivalence (the fibers are affine spaces). Hence there exists an isomorphism 
\begin{eqnarray}\label{secs}
H^{n+k-1}_{dR}(\BP(\CMcal{E}) \setminus X_S; \bC) \xrightarrow{s^*} H^{n+k-1}_{dR}(\BP^n \setminus X_{\ud G}; \bC)
\end{eqnarray}
where $s$ is a section to $pr_1$.
The cohomology group $H^{n+k-1}_{dR}(\BP(\CMcal{E}) \setminus X_S; \bC)$ of a hypersurface complement in $\BP(\CMcal{E})$ can be described explicitly in terms of the de Rham cohomology of $\BP(\CMcal{E})$ with poles along $X_S$.
Terasoma and Konno proved\footnote{More precisely, Terasoma (\cite{Ter90}) provided such an isomorphism in the case $d_1=d_2=\cdots=d_k$ and Konno (\cite{Ko91}) extended the result of Terasoma to the general case when $d_j$'s are not equal. Also see \cite{Gr69} for the pioneering work of Griffiths in the case $k=1$, hypersurface case.} that there is an isomorphism
\begin{eqnarray}\label{gm}
 \Phi_S: A_{c_{\ud G}}/A_{c_{\ud G}} \cap Jac(S) \xrightarrow{\sim}  H^{n+k-1}_{dR}(\BP(\CMcal{E}) \setminus X_S; \bC),
\end{eqnarray}
where
$Jac(S)$ is the Jacobian ideal of $S(\ud y, \ud x)$, and 
$$
c_{\ud G} := \sum_{i=1}^k d_i - (n+1).
$$
Here the sub-index $c_{\ud G}$ means the submodule in which the charge is $c_{\ud G}$ and $Jac(S)$ is the sum of the images of the endomorphisms $\prt{ S }{y_i}, \prt{S}{x_j}$ of ${A}$ ($i =1, \cdots, k, j=0, \cdots, n$).


\subsection{An explicit basis due to Terasoma and Konno}\label{sec2.2}
To review the explicit basis of Terasoma and Konno, we need to examine a map from $ \left({A}/ Jac(S)\right)_{c_{\ud G}}$ to $H^{n+k-1}_{dR}(\BP(\CMcal{E}) \setminus X_S; \bC)$.



Let
{\small{
\begin{eqnarray}\label{oxy}
\Omega_x = \sum_{i=0}^n (-1)^i x_i (dx_0\wedge \cdots \wedge \hat {d x_i} \wedge \cdots \wedge dx_n), \quad 
\Omega_y = \sum_{i=1}^n (-1)^i y_i (dy_1\wedge \cdots \wedge \hat {d y_i} \wedge \cdots \wedge dy_k).
\end{eqnarray}
}}
For a multi-index $\ud i=(i_1, \cdots, i_k)$ with $|\ud i|=i_1 + \cdots +i_k$, consider the rational differential form (given in \cite[p 90]{Dim95} and originally studied by \cite{Ter90} and \cite{Ko91})
\begin{eqnarray}\label{alp}
\a(S,\ud y^{\ud i}u(\ud x) ):=(-1)^{n(k-1)+|\ud i|}(k+|\ud i|-1)!\frac{\ud y^{\ud i}u(\ud x) }{S(\ud q)^{k+|\ud i|}} \Omega_x \Omega_y \in \Omega^{N-2}(\mathbf{P}(\mathcal{E})\setminus X_S)
\end{eqnarray}
where $\ud y^{\ud i}= y_1^{i_1} \cdots y_k^{i_k}$ and $\ud y^{\ud i}u(\ud x) \in \cA_{c_{\ud G}}^0$.  
  Define $k_{\ud G}:  A_{c_{\ud G}}/A_{c_{\ud G}} \cap Jac(S) \to H^{n+k-1}_{dR}(\BP(\CMcal{E}) \setminus X_S; \bC)$ as follows:
  \begin{eqnarray} \label{kg}
  k_{\ud G} ([\ud y^{\ud i}u(\ud x)]) = [\a(S,\ud y^{\ud i}u(\ud x) )]
  \end{eqnarray}
  for $\ud y^{\ud i}u(\ud x) \in A_{c_{\ud G}}$ (linearly extended to every element of $A_{c_{\ud G}}$). Here $[\cdot]$ is the cohomology class.
 Konno (\cite{Ko91}) proved that this map $k_{\ud G}$ is an isomorphism of finite dimensional $\bC$-vector spaces.  

 \begin{defn} [Terasoma's and Konno's approaches]
 We call the map $k_{\ud G}$ as the Terasoma/Konno map.
 \end{defn}
 

 \subsection{An explicit basis due to Dimca and a comparison}\label{sec2.3}
 Here we review Dimca's approach and his comparison result with Terasoma's and Konno's approaches. 
 
 We consider the natural epimorphism which appeared in \cite[p 95]{Dim95},
\[
\delta_{\ud G}: H^n_{dR}(\BP^n \setminus D_{\ud G};\bC) \to H^{n+k-1}_{dR}(\BP^n\setminus X_{\ud G};\bC),
\]
where $D_{\ud G}$ is the divisor defined by $G_1(\ud x) \cdots G_k(\ud x) =0$.
 
Dimca considered the following rational differential form (\cite[p 93]{Dim95})
\[
\b(S, \ud y^{\ud i} u(\ud x)):=(-1)^{|\ud i|+1}\frac{i_1! \cdots i_k!\cdot u(\ud x)}{G_1^{i_1+1}(\ud x) \cdots G_k^{i_k+1}(\ud x)} \Omega_x \in \Omega^n(\BP^n \setminus D_{\ud G}),
\]
for $\ud y^{\ud i} u(\ud x) \in A_{c_{\ud G}}$. 
This explicit representation has a benefit of being written as the Laplace transform\footnote{This fact is crucially used to develop the deformation theory of period integrals of $X_{\ud G}$ in \cite{KPP20}.}
\begin{eqnarray*}\label{laplace}
 -\int_{0}^\infty \cdots \int_{0}^\infty  \ud y^{\ud i} u(\ud x) e^{S(\ud q)} dy_1 \cdots dy_k  
= \frac{(-1)^{|\ud i|+1} i_1 ! \cdots i_k !  u(\ud x)  }{ G_1^{i_1+1} \cdots G_k^{i_k+1}}
\end{eqnarray*}
 for $\ud y^{\ud i} u(\ud x) \in A_{c_{\ud G}}$. 
 Define $d_{\ud G}:  A_{c_{\ud G}}/A_{c_{\ud G}} \cap Jac(S) \to H^n_{dR}(\BP^n \setminus D_{\ud G};\bC) $ as follows:
  \begin{eqnarray} \label{dg}
  d_{\ud G} ([\ud y^{\ud i}u(\ud x)]) = [\b(S,\ud y^{\ud i}u(\ud x) )], \quad \ud y^{\ud i} u(\ud x) \in A_{c_{\ud G}}.
  \end{eqnarray}
 
 \begin{defn}[Dimca's approach]
 We call the map $d_{\ud G}$ as the Dimca map.
 \end{defn}

 In order to describe Dimca's comparison result, we need to know an explicit description of a section $s$ of the second map $s^*$ in \eqref{secs}. For this, we review the toric quotient construction of the projective bundle $\BP(\CMcal{E})$. Let $G=\C^*\times \C^*$. Let $U_G=(\C^{n+1}\setminus \{0\}) \times (\C^{k}\setminus \{0\})$, consider the following $G$-action on $U_G$:
\[
(u,v) (\ud x,\ud y)=(u x_0, \cdots, ux_n, u^{-d_1} vy_1, \cdots, u^{-d_k} v y_k).
\]
The hypersurface $X_S$ in $U_G$ is $G$-invariant. 
The lemma 17, \cite{Dim95}, says that the geometric quotient $U_G/G$ is naturally identified to the projective bundle $\BP(\CMcal{E})$.
 Let $d=lcm(d_1, \cdots, d_k)$ and define the positive integers $e_i=d/d_i$. According to page 98, \cite{Dim95}, one has a well-defined section
\begin{eqnarray} \label{section}
s_{\ud G}(\ud x) =(\ud x, G_1^{e_1-1}(\ud x)\overline G_1^{e_1}(\ud x), \cdots, G_1^{e_k-1}(\ud x)\overline G_1^{e_k}(\ud x) )
\end{eqnarray}
to the projection map $pr_1:\BP(\CMcal{E})\setminus X_S \to \BP^n \setminus X_{\ud G}$.

The following is the main result of Dimca \cite[Proposition 10]{Dim95} which compares his approach to Terasoma-Konno's approach.
\begin{prop}[the proposition 10, \cite{Dim95}] \label{dc}
We have the following equality
\[
\delta_{\ud G} ([\b(S, \ud y^{\ud i} u (\ud x)])=s_{\ud G}^* ([\a(S, \ud y^{\ud i} u(\ud x)])
\]
for $[\ud y^{\ud i} u (\ud x)] \in A_{c_{\ud G}}/A_{c_{\ud G}} \cap Jac(S)$. In other words, we have
\[
\delta_{\ud G} \circ d_{\ud G} =  s_{\ud G}^* \circ k_{\ud G}
\quad (\text{or}\quad pr_1^\ast \circ \delta_{\ud G} \circ d_{\ud G} =  k_{\ud G} ).
\]
\end{prop}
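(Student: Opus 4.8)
The plan is to follow Dimca and reduce the asserted identity, via the residue isomorphism, to an equality of two explicitly computed rational $(n-k)$-forms on $X_{\ud G}$. Since $d_{\ud G}$ and $k_{\ud G}$ are $\bC$-linear it is enough to treat one monomial class $[\ud y^{\ud i}u(\ud x)]$ with $\ud y^{\ud i}u(\ud x)\in A_{c_{\ud G}}$. Because $\Res_{\ud G}$ of \eqref{residuemap} is an isomorphism and $pr_1\circ s_{\ud G}=\id$ --- so that $s_{\ud G}^\ast$ and $pr_1^\ast$ are mutually inverse isomorphisms by \eqref{secs} --- the equality $\delta_{\ud G}\circ d_{\ud G}=s_{\ud G}^\ast\circ k_{\ud G}$ is equivalent to
\[
\Res_{\ud G}\bigl(\delta_{\ud G}[\b(S,\ud y^{\ud i}u(\ud x))]\bigr)=\Res_{\ud G}\bigl(s_{\ud G}^\ast[\a(S,\ud y^{\ud i}u(\ud x))]\bigr)
\]
in $H^{n-k}_{dR,\pr}(X_{\ud G};\bC)$, and also to the assertion $pr_1^\ast\delta_{\ud G}[\b(S,\ud y^{\ud i}u(\ud x))]=[\a(S,\ud y^{\ud i}u(\ud x))]$ in $H^{n+k-1}_{dR}(\BP(\mathcal E)\setminus X_S;\bC)$ via \eqref{kg}. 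I would prove the residue version, as in \cite{Dim95}.

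For the right-hand side, substitute the section $s_{\ud G}$ of \eqref{section} into $\a(S,\ud y^{\ud i}u)$; by the choice of $s_{\ud G}$ one has $S\circ s_{\ud G}=\sum_{j=1}^k|G_j(\ud x)|^{2e_j}$, so $s_{\ud G}^\ast\a$ is a Bochner-Martinelli type kernel on $\BP^n\setminus X_{\ud G}$ with a pole of order $k+|\ud i|$ along $X_{\ud G}$, whose de Rham class does not depend on the homotopy class of the $C^\infty$ section chosen. Applying the residue $\Res_{\ud G}$ along the smooth codimension-$k$ subvariety $X_{\ud G}$ --- realised over $\bC$ by the Andreotti-Norguet integral formula, which expresses the residue of such a high-order pole along $\{G_1=\cdots=G_k=0\}$ through normal derivatives and a maximal minor of the Jacobian $\partial(G_1,\cdots,G_k)/\partial(\ud x)$ --- produces a concrete rational $(n-k)$-form $\omega_{\ud i,u}$ on $X_{\ud G}$ whose leading term is $u(\ud x)$ times a universal constant, divided by that minor.

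For the left-hand side, $\b(S,\ud y^{\ud i}u)$ is a rational $n$-form on $\BP^n\setminus D_{\ud G}$ with a pole of order $i_j+1$ along the component $D_j:=\{G_j=0\}$ of $D_{\ud G}$. From the construction of $\delta_{\ud G}$ on p.\,95 of \cite{Dim95}, the composite $\Res_{\ud G}\circ\delta_{\ud G}$ is, up to sign, the iterated Poincar\'e residue $\Res_{D_1}\circ\cdots\circ\Res_{D_k}$ along the components of $D_{\ud G}$; over $\bC$ this is computed step by step by the relative Bochner-Martinelli formula, each step lowering one pole order and contributing a factor $i_j!$ from the residue of $dt/t^{i_j+1}$. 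The accumulated $i_1!\cdots i_k!$ cancels the constant $(-1)^{|\ud i|+1}i_1!\cdots i_k!$ built into $\b$, and one is left with the same $(n-k)$-form $\omega_{\ud i,u}$ as on the right, up to a sign. Matching that sign is the remaining point: it is an orientation computation relating $\Omega_x\Omega_y$ in $\a$ to $\Omega_x$ in $\b$, governed by the fibrewise Cayley-trick identity
\[
\int_{\BP^{k-1}}\frac{\ud y^{\ud i}\,\Omega_y}{\bigl(\sum_{j}t_j y_j\bigr)^{k+|\ud i|}}=(\pm1)\,\frac{i_1!\cdots i_k!}{(k+|\ud i|-1)!}\cdot\frac{1}{t_1^{i_1+1}\cdots t_k^{i_k+1}}
\]
(with $t_j$ specialised to $G_j(\ud x)$), the Beta-type integral underlying both integral formulas, which converts the normalisation $(-1)^{n(k-1)+|\ud i|}(k+|\ud i|-1)!$ of $\a$ in \eqref{alp} into the normalisation of $\b$.

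\emph{Main obstacle.} The delicate point is the last step: checking that the Andreotti-Norguet realisation of $\Res_{\ud G}\circ s_{\ud G}^\ast$ and the Bochner-Martinelli realisation of $\Res_{\ud G}\circ\delta_{\ud G}$ land on literally the same form on $X_{\ud G}$, with every constant and every sign accounted for, together with the justification that $s_{\ud G}$ may replace a holomorphic section by homotopy invariance of de Rham cohomology. This is exactly where the proof is welded to $\Bbbk=\bC$, since it relies on integral representations of $\Res_{\ud G}$; removing that dependence is what forces one to recast $k_{\ud G}$ and $d_{\ud G}$ through Cech-de Rham complexes and a spectral-sequence comparison, as carried out in Section \ref{sec3}.
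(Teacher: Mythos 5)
Your proposal reconstructs the architecture of Dimca's own argument over $\bC$ (which is exactly what the paper cites for Proposition \ref{dc} and deliberately does not reprove): reduce via the isomorphisms $\Res_{\ud G}$ and $s_{\ud G}^*=(pr_1^*)^{-1}$ to an identity of explicit residue forms, compute the right side by Andreotti--Norguet applied to $s_{\ud G}^*\a$ (using $S\circ s_{\ud G}=\sum_j|G_j|^{2e_j}$, which is correct), compute the left side as an iterated Bochner--Martinelli/Leray residue of $\b$, and match constants through the Beta-type fibre integral. The skeleton is right, but the proof is not actually carried out: the entire content of the proposition is the equality of normalizations, and that is precisely what you defer to the ``main obstacle.'' Two specific points would have to be supplied before this counts as a proof. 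First, the identification of $\Res_{\ud G}\circ\delta_{\ud G}$ with the iterated Poincar\'e residue along $D_1,\dots,D_k$, including its sign, is asserted rather than derived from the definition of $\delta_{\ud G}$. Second, the displayed ``fibrewise Cayley-trick identity'' integrates a holomorphic $(k-1)$-form over $\BP^{k-1}(\bC)$, which has real dimension $2(k-1)$; to make sense of it one must specify the real $(k-1)$-cycle of fibre integration (e.g.\ a compact torus or the projectivized positive real cone, as in the Laplace-transform identity in subsection \ref{sec2.3}) and verify that this is the cycle the Andreotti--Norguet kernel actually produces. The constant-matching is genuinely delicate: the paper's own quantitative version (Theorem \ref{mthm}) finds that $pr_1^*\circ\delta_{\ud G}\circ d_{\ud G}$ equals $k_{\ud G}$ only up to the explicit factor $\frac{i_1!\cdots i_k!\,(k-1)!\,k^{|\ud i|}}{(-1)^{|\ud i|+1}(|\ud i|+k-1)!}$, which already for $\ud i=0$ is $-1$ rather than $1$; so a proof that waves at ``up to a sign'' has not established the stated identity.

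It is also worth contrasting your route with the one the paper actually takes for this comparison. Rather than realizing $\Res_{\ud G}$ by integral formulas (which, as you note, welds the argument to $\bC$), the paper never applies $\Res_{\ud G}$ at all: it realizes $\delta_{\ud G}$ as the edge surjection \eqref{XG-complement-ss-es} of the \v{C}ech--de Rham spectral sequence for the cover $\{D_+(G_\lambda)\}$, replaces your analytic section by the algebraic one $\sigma_{\ud G}:y_j\mapsto 1/G_j$, and uses the algebraic ``partition of unity'' $S_\lambda/S$ to build a homotopy operator $\tau$ so that $\delta_D([\omega])$ is computed on the cochain level as $(-d_\rightarrow\tau)^{k-1}\omega$ (Lemma \ref{clem}). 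This turns the whole comparison into a finite wedge-product computation valid over any field of characteristic zero, and it is what makes every constant and sign visible. If you want to complete your $\bC$-proof as written, you will have to do the analogue of that computation inside the two integral formulas; the spectral-sequence route does the same bookkeeping once, algebraically.
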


%

\section{Dimca's comparison over any field $\Bbbk$ of characteristic 0}\label{sec3}

In this section, we will work with a field $\Bbbk$ of characteristic $0$ and homogeneous polynomials $G_1(\underline{x}),\cdots,G_k(\underline{x})\in\Bbbk[\underline{x}]$ defining a smooth complete intersection $X_{\underline{G}}$ in $\mathbf{P}^n$ over $\Bbbk$. 
In order to simplify the notations, we will use the notations as the $\bC$-case for $A, \Res_{\ud G}, k_{\ud G}, d_{\ud G},$ and $\delta_{\ud G}$.\footnote{For example, $A=\Bbbk[\ud y, \ud x]$, $\Res_{\ud G}: H^{q}_{dR}(\mathbf{P}^n\setminus X_{\ud G};\Bbbk) \to H^{q-2k+1}_{dR}(X_{\underline{G}};\Bbbk).$}

Throughout this section, we will use the affine open covering
\begin{align}\label{XG-complement-covering}
\mathfrak{U}:=\{D_+(G_\lambda)\}_{\lambda=1,\cdots,k}
\end{align}
of $\mathbf{P}^n\setminus X_{\underline{G}}$ where $D_+(G_\lambda)$ denotes the nonvanishing locus of $G_\lambda$ in $\mathbf{P}^n$ (see \cite[Proposition 2.5]{Har77} for the notation). Denote $D_{\underline{G}}\subseteq\mathbf{P}^n$ the divisor cut out by $G_1\cdots G_k$. Then
\begin{align}\label{DG-complement}
D_+(G_1)\cap\cdots\cap D_+(G_k)=D_+(G_1\cdots G_k)=\mathbf{P}^n\setminus D_{\underline{G}}.
\end{align}

\subsection{The Gysin exact sequence} \label{sec3.1}
Here we show that the Gysin exact sequence \eqref{gy} holds when $\bC$ is replaced by $\Bbbk$.

Given an $\CMcal{O}_{\mathbf{P}^n}$-module $\CMcal{F}$, denote $\underline{H}_{X_{\underline{G}}}(\CMcal{F})\subseteq\CMcal{F}$ the subsheaf of sections supported on $X_{\underline{G}}$. Since the assignment $\CMcal{F}\mapsto\underline{H}_{X_{\underline{G}}}(\CMcal{F})$ defines a left exact functor, there are derived functors $\underline{H}_{X_{\underline{G}}}^p:=\mathbf{R}^p\underline{H}_{X_{\underline{G}}}$. Since $X_{\underline{G}}\subseteq\mathbf{P}^n$ is a smooth complete intersection, we can describe $\underline{H}_{X_{\underline{G}}}^p(\CMcal{F})$ using the affine open covering $\mathfrak{U}=\{D_+(G_\lambda)\}_{\lambda=1,\cdots,k}$ of (\ref{XG-complement-covering}). Let 
\begin{align*}
\CMcal{F}_{\lambda_0\cdots\lambda_p}:=\left(D_+(G_{\lambda_0}\cdots G_{\lambda_p})\hookrightarrow\mathbf{P}^n\right)_\ast\left(\CMcal{F}|_{D_+(G_{\lambda_0}\cdots G_{\lambda_p})}\right)
\end{align*}
where $\{\l_0, \cdots, \l_p \} \subset \{1, \cdots, k \}$.
Then there is the associated \v{C}ech complex of $\CMcal{O}_{\mathbf{P}^n}$-modules:
\begin{align*}
\xymatrix{\check{\mathfrak{C}}^\bullet(\mathfrak{U},\CMcal{F}):0 \ar[r] & \CMcal{F} \ar[r] & \displaystyle\prod_{\lambda_0}\CMcal{F}_{\lambda_0} \ar[r] & \displaystyle\prod_{\lambda_0<\lambda_1}\CMcal{F}_{\lambda_0\lambda_1} \ar[r] & \cdots \ar[r] & \CMcal{F}_{1\cdots k} \ar[r] & 0}
\end{align*}
where $\CMcal{F}$ is put in cohomological degree $0$.
\begin{lem}\label{XG-supported-coh} If $\CMcal{F}$ is a quasicoherent $\CMcal{O}_{\mathbf{P}^n}$-module, then the following hold.
\begin{quote}
(1) $\check{\mathfrak{C}}^\bullet(\mathfrak{U},\CMcal{F})$ represents $\mathbf{R}\underline{H}_{X_{\underline{G}}}(\CMcal{F})$ in the derived category of $\CMcal{O}_{\mathbf{P}^n}$-modules whose cohomology sheaves are supported on $X_{\underline{G}}$. In other words,
\begin{align*}
\underline{H}_{X_{\underline{G}}}^p(\CMcal{F})\cong H^p(\check{\mathfrak{C}}^\bullet(\mathfrak{U},\CMcal{F}))
\end{align*}
holds for every $p\in\mathbb{Z}$.\\
(2) There is a canonical map
\begin{align*}
\xymatrix{\CMcal{F}\otimes_{\CMcal{O}_{\mathbf{P}^n}}\CMcal{O}_{X_{\underline{G}}} \ar[r] & \underline{H}_{X_{\underline{G}}}^k(\CMcal{F}) & s \ar@{|->}[r] & \displaystyle\frac{s}{G_1\cdots G_k}}
\end{align*}
(3) If, furthermore, $\CMcal{F}$ is flat over $\CMcal{O}_{\mathbf{P}^n}$, then $H_{X_{\underline{G}}}^p(\CMcal{F})=0$ for $p\neq k$.
\end{quote}
\end{lem}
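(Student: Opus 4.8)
The plan is to prove the three assertions in tandem, since they are logically intertwined: essentially everything follows once one understands the local structure of the \v{C}ech complex $\check{\mathfrak{C}}^\bullet(\mathfrak{U},\CMcal{F})$ along $X_{\underline{G}}$. First I would reduce to a local computation. The statement is local on $\mathbf{P}^n$, so pick an affine open $U=\Spec R\subseteq\mathbf{P}^n$ and write $g_\lambda$ for the image of $G_\lambda$ in $R$ (more precisely, dehomogenize; since the $D_+(G_\lambda)$ restrict to the basic opens $D(g_\lambda)$, the \v{C}ech complex on $U$ is the complex $\check C^\bullet(\{D(g_\lambda)\},M)$ for $M=\CMcal{F}(U)$). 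Because $X_{\underline{G}}$ is a complete intersection, after possibly shrinking $U$ we may arrange that $g_1,\dots,g_k$ form a regular sequence in $R$ cutting out $X_{\underline{G}}\cap U$. Now the key observation is that $\check C^\bullet(\{D(g_\lambda)\},M)$ is, up to a shift and sign, the \v{C}ech cochain complex associated to $g_1,\dots,g_k$ — the object computing local cohomology $H^\bullet_{(g_1,\dots,g_k)}(M)$ — and in fact, after removing the degree-$0$ term $M$ that is the kernel of the map into $\prod_{\lambda}M_{g_\lambda}$ being replaced by the local cohomology $H^0$, the cohomology of the full complex $\check{\mathfrak{C}}^\bullet(\mathfrak{U},\CMcal{F})$ at $U$ computes exactly $H^p_{(g_1,\dots,g_k)}(M)=H^p_{X_{\underline{G}}\cap U}(M)$. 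This gives (1): the hypercohomology sheaf $\underline{H}^p_{X_{\underline{G}}}(\CMcal{F})$ is by definition $\mathbf{R}^p\underline{H}_{X_{\underline{G}}}(\CMcal{F})$, and the standard identification of local cohomology with (stalkwise, or sheafified) \v{C}ech cohomology of the $g_\lambda$'s, valid for quasicoherent $\CMcal{F}$, identifies this with $H^p(\check{\mathfrak{C}}^\bullet(\mathfrak{U},\CMcal{F}))$; one should note the supports are on $X_{\underline{G}}$ since each term $\CMcal{F}_{\lambda_0\cdots\lambda_p}$ for $p\geq 1$ has all its higher-\v{C}ech-cohomology supported there.

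For (3), I would invoke the standard vanishing theorem for local cohomology with respect to a regular sequence: if $g_1,\dots,g_k$ is a regular sequence on $M$, then $H^p_{(g_1,\dots,g_k)}(M)=0$ for $p\neq k$, and $H^k_{(g_1,\dots,g_k)}(M)\cong M_{g_1\cdots g_k}/\bigl(\sum_\lambda M_{g_1\cdots\widehat{g_\lambda}\cdots g_k}\bigr)$ — the top cohomology of the \v{C}ech complex on the regular sequence. The hypothesis that $\CMcal{F}$ is flat over $\CMcal{O}_{\mathbf{P}^n}$ guarantees, via local flatness criteria, that after restricting to a suitable affine $U$ the module $M$ is flat over $R$, hence $g_1,\dots,g_k$ — being a regular sequence in $R$ with $R/(g_1,\dots,g_k)\neq 0$ — remains a regular sequence on $M$ (flatness preserves regular sequences: multiplication by $g_1$ is injective on the flat module $M$, then pass to $M/g_1M\cong M\otimes_R R/g_1 R$, which is flat over $R/g_1 R$, and induct). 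Thus the vanishing in degrees $\neq k$ follows. Sheafifying over a cover of $\mathbf{P}^n$ by such affines gives $\underline{H}^p_{X_{\underline{G}}}(\CMcal{F})=0$ for $p\neq k$.

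For (2), the map should be constructed as the composite
\[
\CMcal{F}\otimes_{\CMcal{O}_{\mathbf{P}^n}}\CMcal{O}_{X_{\underline{G}}}\;=\;\CMcal{F}/(G_1,\dots,G_k)\CMcal{F}\;\lra\;\underline{H}^k_{X_{\underline{G}}}(\CMcal{F}),
\]
where on an affine $U$ as above it sends the class of $s\in M$ to the class of $s/(g_1\cdots g_k)$ in the \v{C}ech top cohomology $M_{g_1\cdots g_k}/\sum_\lambda M_{g_1\cdots\widehat{g_\lambda}\cdots g_k}$; one checks this is well-defined (if $s\in(g_1,\dots,g_k)M$, say $s=\sum_\mu g_\mu t_\mu$, then $s/(g_1\cdots g_k)=\sum_\mu t_\mu/(g_1\cdots\widehat{g_\mu}\cdots g_k)$ lies in the denominator subspace) and independent of the choices (the $g_\lambda$ are determined up to units on overlaps, and the expression $s/(g_1\cdots g_k)\,\Omega$-type formula transforms correctly; more invariantly, this is the \v{C}ech representative of the canonical "fundamental class" generator of top local cohomology). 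When $\CMcal{F}$ is flat this map is in fact an isomorphism, as follows from the explicit description of $H^k$ above together with the right-exactness of $\otimes\CMcal{O}_{X_{\underline{G}}}$, but the statement only asks for the canonical map so I would record the isomorphism claim as a remark rather than prove it here.

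I expect the main obstacle to be bookkeeping rather than conceptual: matching the \v{C}ech complex $\check{\mathfrak{C}}^\bullet(\mathfrak{U},\CMcal{F})$ indexed by subsets $\{\lambda_0<\cdots<\lambda_p\}\subseteq\{1,\dots,k\}$ (which starts with $\CMcal{F}$ in degree $0$ and has length $k+1$) against the Koszul-type local cohomology complex on the $g_\lambda$ (which one usually writes starting with $M\to\prod M_{g_\lambda}\to\cdots$), keeping the degree conventions and signs straight, and handling the passage between the homogeneous polynomials $G_\lambda$ on $\mathbf{P}^n$ and their dehomogenizations $g_\lambda$ on affine charts — in particular verifying that the regular-sequence property is genuinely local and that the claimed canonical map in (2) glues. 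The input "$X_{\underline{G}}$ is a smooth complete intersection" is used precisely to ensure the $g_\lambda$ form a regular sequence Zariski-locally; smoothness beyond the complete-intersection condition is not needed for this lemma. No restriction on the characteristic or algebraic closedness of $\Bbbk$ enters, which is the whole point.
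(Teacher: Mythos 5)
Your argument is correct and is, in substance, identical to the paper's proof: the paper disposes of (1), (2), (3) by citing Stacks Project Tags 0G7M, 0G7Q, and 0G7P respectively, and what you have written out — the identification of the extended alternating \v{C}ech complex on the $D_+(G_\lambda)$ with $\mathbf{R}\underline{H}_{X_{\underline{G}}}$, the explicit class $s/(G_1\cdots G_k)$ in top local cohomology, and the vanishing for $p\neq k$ via the fact that a regular sequence on $\CMcal{O}_{\mathbf{P}^n}$ stays regular on a flat module — is precisely the content of those tags. The only point deserving one extra line is that the $G_\lambda$ are sections of line bundles rather than global functions, so ``$s/(G_1\cdots G_k)$'' should be read as a section of $\CMcal{F}$ over $D_+(G_1\cdots G_k)$ via the trivialization of $\CMcal{O}(-\textstyle\sum d_\lambda)$ by $1/(G_1\cdots G_k)$ there, which also settles the gluing worry you raise.
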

\begin{proof}
For (1), see \cite[\href{https://stacks.math.columbia.edu/tag/0G7M}{Tag 0G7M}]{Stacks}. For (2), see \cite[\href{https://stacks.math.columbia.edu/tag/0G7Q}{Tag 0G7Q}]{Stacks}. For (3), see \cite[\href{https://stacks.math.columbia.edu/tag/0G7P}{Tag 0G7P}]{Stacks}.
\end{proof}
On the other hand, for the closed embedding $i:X_{\underline{G}}\hookrightarrow\mathbf{P}^n$ and the open inclusion $j:(\mathbf{P}^n\setminus X_{\underline{G}})\hookrightarrow\mathbf{P}^n$, there is a distinguished triangle (see \cite[\href{https://stacks.math.columbia.edu/tag/0G72}{Tag 0G72}]{Stacks})
\begin{align}\label{XG-dist-triangle}
\xymatrix{i_\ast\mathbf{R}\underline{H}_{X_{\underline{G}}}(\Omega_{\mathbf{P}^n/\Bbbk}^\bullet) \ar[r] & \Omega_{\mathbf{P}^n/\Bbbk}^\bullet \ar[r] & \mathbf{R}j_\ast\Omega_{(\mathbf{P}^n\setminus X_{\underline{G}})/\Bbbk}^\bullet \ar[r] & i_\ast\mathbf{R}\underline{H}_{X_{\underline{G}}}(\Omega_{\mathbf{P}^n/\Bbbk}^\bullet)[1]}.
\end{align}
Using Lemma \ref{XG-supported-coh}, we may choose a representative of $\mathbf{R}\underline{H}_{X_{\underline{G}}}(\Omega_{\mathbf{P}^n/\Bbbk}^\bullet)$. Using Cartan-Eilenberg resolutions, we get a spectral sequence (see \cite[\href{https://stacks.math.columbia.edu/tag/015J}{Tag 015J}]{Stacks}):
\begin{align*}
E_1^{p,q}=\underline{H}_{X_{\underline{G}}}^p(\Omega_{\mathbf{P}^n/\Bbbk}^q)\implies\underline{H}_{X_{\underline{G}}}^{p+q}(\Omega_{\mathbf{P}^n/\Bbbk}^\bullet)
\end{align*}
This spectral sequence degenerates by Lemma \ref{XG-supported-coh}:
\begin{align*}
E_1^{p,q}=\left\{\begin{array}{ll}
\underline{H}_{X_{\underline{G}}}^k(\Omega_{\mathbf{P}^n/\Bbbk}^q) & \textrm{if}\quad p=k \\
0 & \textrm{otherwise}
\end{array}\right.
\end{align*}
which implies
\begin{align*}
\mathbf{R}\underline{H}_{X_{\underline{G}}}(\Omega_{\mathbf{P}^n/\Bbbk}^\bullet)\cong\underline{H}_{X_{\underline{G}}}^k(\Omega_{\mathbf{P}^n/\Bbbk}^\bullet)[-k]
\end{align*}
To compute the right hand side, consider the conormal exact sequence (see \cite[Proposition 8.12]{Har77}):
\begin{align*}
\xymatrix{0 \ar[r] & \CMcal{C}_{X_{\underline{G}}/\mathbf{P}^n} \ar[r] & \Omega_{\mathbf{P}^n/\Bbbk}^1\otimes_{\CMcal{O}_{\mathbf{P}^n}}\CMcal{O}_{X_{\underline{G}}} \ar[r] & \Omega_{X_{\underline{G}}/\Bbbk}^1 \ar[r] & 0}
\end{align*}
where $\CMcal{C}_{X_{\underline{G}}/\mathbf{P}^n}$ is the conormal module of the embedding $X_{\underline{G}}\hookrightarrow\mathbf{P}^n$; the exactness at $\CMcal{C}_{X_{\underline{G}}/\mathbf{P}}$ follows from the smoothness of $X_{\underline{G}}$ over $\Bbbk$. Hence the composition 
\begin{align*}
\xymatrix{\Omega_{\mathbf{P}^n/\Bbbk}^\bullet \ar[r] & \Omega_{\mathbf{P}^n/\Bbbk}^\bullet\otimes_{\CMcal{O}_{\mathbf{P}^n}}\CMcal{O}_{X_{\underline{G}}} \ar[r] & \Omega_{X_{\underline{G}}/\Bbbk}^\bullet}
\end{align*}
is surjective (the first map is obviously surjective). In this setting, we may define the \emph{Gysin map} as follows:
\begin{align*}
\xymatrix{\gamma_{\underline{G}}^q:\Omega_{X_{\underline{G}}/\Bbbk}^q \ar[r] & \underline{H}_{X_{\underline{G}}}^k(\Omega_{\mathbf{P}^n/\Bbbk}^{q+k}) & \omega \ar@{|->}[r] & \displaystyle\frac{\widetilde{\omega}|_{X_{\underline{G}}}}{G_1\cdots G_k}\wedge dG_1\wedge\cdots\wedge dG_k}.
\end{align*}
Here we may take a lift $\widetilde{\omega}$ for a given local section $\omega$ over affine open subsets, since $\Omega_{\mathbf{P}^n/\Bbbk}^q\rightarrow\Omega_{X_{\underline{G}}/\Bbbk}^q$ still remains surjective over affine open subsets. Since any two lifts of $\omega$ differ by sections of the form
\begin{align*}
\sum_{\lambda=1}^kG_\lambda\alpha_\lambda+\sum_{\lambda=1}^kdG_\lambda\wedge\beta_\lambda
\end{align*}
which maps to zero by construction, $\gamma_{\underline{G}}^q$ are well-defined. Moreover, $\gamma_{\underline{G}}^q$ gather to define a cochain map (see \cite[\href{https://stacks.math.columbia.edu/tag/0G87}{Tag 0G87}]{Stacks}) so we get a map
\begin{align*}
\xymatrix{\gamma_{\underline{G}}:\Omega_{X_{\underline{G}}/\Bbbk}^\bullet \ar[r] & \underline{H}_{X_{\underline{G}}}^k(\Omega_{\mathbf{P}^n/\Bbbk}^\bullet)[k] \ar[r]^-\sim & \mathbf{R}\underline{H}_{X_{\underline{G}}}(\Omega_{\mathbf{P}^n/\Bbbk}^\bullet)[2k]}
\end{align*}
This map is a quasi-isomorphism (see \cite[II, Lemma (3.1)]{Har75} and its proof). Therefore, the distinguished triangle (\ref{XG-dist-triangle}) is isomorphic to
\begin{align*}
\xymatrix{i_\ast\Omega_{X_{\underline{G}}}^\bullet[-2k] \ar[r] & \Omega_{\mathbf{P}^n/\Bbbk}^\bullet \ar[r] & \mathbb{R}j_\ast\Omega_{(\mathbf{P}^n\setminus X_{\underline{G}})/\Bbbk}^\bullet \ar[r] & i_\ast\Omega_{X_{\underline{G}}}^\bullet[-2k+1]}
\end{align*}
By taking the derived global section $\mathbf{R}\Gamma(\mathbf{P}^n,-)$, the associated long exact sequence gives us
\begin{align}\label{gysin}
\xymatrixcolsep{1.5pc}\xymatrix{\cdots \ar[r] & H^q_{dR}(\mathbf{P}^n;\Bbbk) \ar[r] & H^q_{dR}(\mathbf{P}^n\setminus X_{\underline{G}};\Bbbk) \ar[r] & H^{q-2k+1}_{dR}(X_{\underline{G}};\Bbbk) \ar[r] & H^{q+1}_{dR}(\mathbf{P}^n;\Bbbk) \ar[r] & \cdots}
\end{align}

\begin{defn}
We define the residue map $\Res_{\underline{G}}:H^q_{dR}(\mathbf{P}^n\setminus X_{\underline{G}};\Bbbk) \to H^{q-2k+1}_{dR}(X_{\underline{G}};\Bbbk)$ to be the connecting homomorphism in \eqref{gysin}. 
\end{defn}

\subsection{Terasoma-Konno's approach over $\Bbbk$}\label{sec3.2}


\begin{defn}[Terasoma-Konno's map over $\Bbbk$] \label{kgk}
We define $k_{\ud G}$ exactly same as \eqref{kg}, which works for any field of characteristic 0.
\end{defn}

Then the map $k_{\ud G}$ is an $\Bbbk$-linear isomorphism:
\begin{eqnarray}\label{kkg}
k_{\ud G}:  A_{c_{\ud G}}/A_{c_{\ud G}} \cap Jac(S) \xrightarrow{\sim} H^{n+k-1}_{dR}(\mathbf{P}(\mathcal{E})\setminus X_S;\Bbbk).
\end{eqnarray}
This is because one can find a $\bQ$-basis $\mu$ of $\bQ[\ud q]_{c_{\ud G}}/\bQ[\ud q]_{c_{\ud G}}\cap Jac(S)$ and a $\bQ$-basis $\nu$ of $H^{n+k-1}_{dR}(\mathbf{P}(\mathcal{E})\setminus X_S;\bQ)$ such that $\mu$ (respectively, $\nu$) extends to a $\bC$-basis of  $\bC[\ud q]_{c_{\ud G}}/\bC[\ud q]_{c_{\ud G}}\cap Jac(S)$ (respectively, $H^{n+k-1}_{dR}(\mathbf{P}(\mathcal{E})\setminus X_S;\bC)$).   
Since $\Bbbk$ is a $\bQ$-algebra, we can extend theses bases to $\Bbbk$-bases to obtain the isomorphism in \eqref{kkg}. 


Here we provide a detailed structure of the map $k_{\ud G}$.
For any commutative $\Bbbk$-algebra C, let us consider the de Rham complex $(\Omega^\bullet_C, d)$ and the twisted de Rham complex $(\Omega^\bullet_C, d+ dS \wedge)$ for any element $S\in C$. We define the charge and the weight on the de Rham complex by
\[
	\begin{cases}
		\ch(q_i)=\ch(dq_i), &i=1,2,\dots,N\\
		\wt(q_i)=\wt(dq_i). &i=1,2,\dots,N\\
	\end{cases}
\]
We have the weight and the charge decomposition of $\Omega$ such that
\[
	\Omega = \bigoplus_{-N\leq j \leq 0}\bigoplus_{\l \in \Z}\bigoplus_{w \geq 0}\Omega^j_{\l, (w)}.
\]

Let $B:=\Bbbk[\underline{q},S^{-1}]_{0,(0)}$. Then $\mathbf{P}(\mathcal{E})\setminus X_S$ is a smooth affine variety whose coordinate ring is given by $B$.

\begin{defn} 
We define a sequence of maps as follows (here $N=n+k+1$):

\end{defn}

We define a map $\varphi_S$ as the composition of the following maps:
\[
	\begin{tikzcd}
		A_{c_{\ud G}} \arrow[r,"\mu"] & \left(\Omega^N_{A}\right)_0 \arrow[r,"\rho"] & \left(\Omega^N_{A[S^{-1}]}\right)_{0,(0)} \arrow[r,"\theta_{wt}\circ\theta_{ch}"] & \Omega_{B}^{N-2} \arrow[r,"\textrm{quotient}"] & \Omega_B^{N-2}/d(\Omega_B^{N-3})=H_{\mathrm{dR}}^{N-2}(\mathbf{P}(\mathcal{E})\setminus X_S;\Bbbk)
	\end{tikzcd}
\]
where 
	\begin{enumerate}[(i)]
		\item 
			For $\underline{q}^{\underline{u}}:= q_1^{u_1} \cdots q_N^{u_N} \in A_{c_{\ud G}}$,
	        $$
	        \mu(\underline{q}^{\underline{u}})=-\underline{q}^{\underline{u}}dq_1\wedge \cdots \wedge dq_N. 
	        $$
	
		\item  For $\underline{x}^{\underline{u}} = x_0^{u_0} \cdots x_n^{u_n}$, $\underline{y}^{\underline{v}}:=y_1^{v_1}\cdots y_k^{v_k}$ with  $|\ud v| =v_{1} + \cdots + v_{k}$,
                $$
                \rho(\underline{x}^{\underline{u}}\underline{y}^{\underline{v}} dq_1 \wedge \cdots \wedge dq_N )=(-1)^{|\underline{v}|+k-1}(|\underline{v}|+k-1)!\frac{\underline{x}^{\underline{u}}\underline{y}^{\underline{v}}}{S^{|\underline{v}|+k}} dq_1 \wedge \cdots \wedge dq_N.
                $$

		\item Let $\theta_{\ch}$ be the contraction operator with the vector field $\sum_{i=1}^N\ch(q_i)q_i\frac{\partial}{\partial q_i}$, and $\theta_{\wt}$ be the contraction operator with the vector field $\sum_{i=1}^N\wt(q_i)q_i\frac{\partial}{\partial q_i}$.
	\end{enumerate}

Consider the rational differential form same as \eqref{alp}
\[
\a(S,\ud y^{\ud i}u(\ud x) ):=(-1)^{n(k-1)+|\ud i|}(k+|\ud i|-1)!\frac{\ud y^{\ud i}u(\ud x) }{S(\ud q)^{k+|\ud i|}} \Omega_x \Omega_y \in \Omega_B^{N-2}
\]
where $\ud y^{\ud i}u(\ud x) \in \cA_{c_{\ud G}}^0$. Then a simple computation confirms that
\begin{eqnarray}\label{result}
\varphi_S (\ud y^{\ud i}u(\ud x) )= [\a(S,\ud y^{\ud i}u(\ud x) )]
\end{eqnarray}
where $[\cdot]$ means the cohomology class.

\begin{prop}\label{kerp}
The kernel of the map $\varphi_S$ is $\cK_{\ud G} \cap A_{c_{\ud G}}$ where
\begin{eqnarray}\label{kker}
\cK_{\ud G}:=\bigoplus_{i=1}^N \left( \pa{q_i}+\prt{ S(\ud q) }  {q_i}\right) A. 
\end{eqnarray}
\end{prop}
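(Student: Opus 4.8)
The plan is to prove the two inclusions separately, after first reducing the statement to a purely algebraic identity. Observe that every $\Bbbk$-basis monomial of $A_{c_{\ud G}}$ has the shape $\ud y^{\ud i}u(\ud x)$, so \eqref{result} together with \eqref{kg} shows that $\varphi_S = k_{\ud G}\circ\pi$, where $\pi\colon A_{c_{\ud G}}\twoheadrightarrow A_{c_{\ud G}}/(A_{c_{\ud G}}\cap Jac(S))$ is the quotient map. Since $k_{\ud G}$ is an isomorphism by \eqref{kkg}, this gives $\ker\varphi_S = A_{c_{\ud G}}\cap Jac(S)$, and it remains only to establish the algebraic identity $A_{c_{\ud G}}\cap Jac(S) = \cK_{\ud G}\cap A_{c_{\ud G}}$. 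The inclusion $\subseteq$ of $\cK_{\ud G}\cap A_{c_{\ud G}}$ into $\ker\varphi_S$ will come from a cochain-level computation, and the reverse inclusion from a weight-filtration dimension count.

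\textbf{The inclusion $\cK_{\ud G}\cap A_{c_{\ud G}}\subseteq\ker\varphi_S$.} Unwind $\varphi_S=(\text{quotient})\circ\theta_{\wt}\circ\theta_{\ch}\circ\rho\circ\mu$. By linearity it suffices to treat $P=\bigl(\pa{q_i}+\prt{S}{q_i}\bigr)(a)$ for a single index $i$ and an element $a\in A$ homogeneous of charge $c_{\ud G}+\ch(q_i)$ and some fixed weight $w$. Write $d_S:=d+dS\wedge$. A direct check gives $\mu(P)=-d_S\eta$ with $\eta:=(-1)^{i-1}a\,dq_1\wedge\cdots\wedge\widehat{dq_i}\wedge\cdots\wedge dq_N\in\Omega^{N-1}_A$. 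Since $d\eta$ and $dS\wedge\eta$ have $y$-weights differing by exactly $1$, the sign-and-factorial prescription defining $\rho$ (division by $S^{|\ud v|+k}$) is precisely arranged so that $\rho(d_S\eta)=d\widetilde\eta$ in $\Omega^N_{A[S^{-1}]}$, where $\widetilde\eta\in\Omega^{N-1}_{A[S^{-1}]}$ is obtained from $\eta$ by dividing $a$ by the appropriate power of $S$ and rescaling by an explicit nonzero scalar; hence $\rho(\mu(P))=-d\widetilde\eta$ is exact. Now $\rho(\mu(P))$ has charge $0$ and weight $0$, hence so does $\widetilde\eta$, and on the charge-$0$, weight-$0$ part the Cartan identity $\iota_E\circ d+d\circ\iota_E=\mathcal{L}_E=0$ holds for each of the two Euler fields $E$; consequently $\theta_{\wt}\theta_{\ch}$ commutes with $d$ there and so carries exact forms to exact forms in $\Omega^{N-2}_B$, which the quotient map kills. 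Thus $\varphi_S(P)=0$. (Equivalently, this shows $\varphi_S$ factors through the charge-$c_{\ud G}$ part of $\Omega^N_A/d_S\Omega^{N-1}_A\cong A/\cK_{\ud G}$.)

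\textbf{The reverse inclusion $A_{c_{\ud G}}\cap Jac(S)\subseteq\cK_{\ud G}$.} Filter $A_{c_{\ud G}}$ by weight, $F_mA_{c_{\ud G}}:=\bigoplus_{w\le m}A_{c_{\ud G},(w)}$, with each summand finite-dimensional. The key point is that $\prt{S}{y_\ell}=G_\ell$ has weight $0$ while $\pa{y_\ell}$ strictly lowers weight, and $\prt{S}{x_j}$ has weight $1$ while $\pa{x_j}$ preserves weight; hence for weight-homogeneous $a$ the top-weight component of $\bigl(\pa{q_i}+\prt{S}{q_i}\bigr)(a)$ equals $\prt{S}{q_i}\cdot a$. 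Since $A_{c_{\ud G}}\cap Jac(S)$ is weight-graded, it suffices to take $g\in A_{c_{\ud G}}\cap Jac(S)$ homogeneous of weight $w$; write $g=\sum_\ell G_\ell a_\ell+\sum_j\prt{S}{x_j}b_j$ with $a_\ell$ (resp. $b_j$) homogeneous of charge $c_{\ud G}-d_\ell$, weight $w$ (resp. charge $c_{\ud G}+1$, weight $w-1$). Then $\kappa:=\sum_\ell\bigl(\pa{y_\ell}+G_\ell\bigr)(a_\ell)+\sum_j\bigl(\pa{x_j}+\prt{S}{x_j}\bigr)(b_j)$ lies in $\cK_{\ud G}\cap A_{c_{\ud G}}$, belongs to $F_w$, and has weight-$w$ component $g$; thus $\mathrm{gr}^F_w(\cK_{\ud G}\cap A_{c_{\ud G}})$ contains the weight-$w$ part of $A_{c_{\ud G}}\cap Jac(S)$. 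Combined with the inclusion from the previous paragraph (which exhibits $\cK_{\ud G}\cap A_{c_{\ud G}}$ as a filtered subspace of the weight-graded space $A_{c_{\ud G}}\cap Jac(S)$, forcing the opposite inclusion of $w$-th graded pieces) and finite-dimensionality, the graded pieces coincide for every $w$. Hence $\cK_{\ud G}\cap F_m=(A_{c_{\ud G}}\cap Jac(S))\cap F_m$ for all $m$, and taking the union over $m$ gives $\cK_{\ud G}\cap A_{c_{\ud G}}=A_{c_{\ud G}}\cap Jac(S)=\ker\varphi_S$.

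\textbf{Main difficulty.} The crux is the pole-order reduction identity $\rho\circ d_S=d\circ(\text{rescaled }\rho)$ used in the second step: this requires matching the sign-factorial constants in the definition of $\rho$ against those produced when differentiating $S^{-m}$, handling the cases $q_i\in\{y_1,\dots,y_k\}$ and $q_i\in\{x_0,\dots,x_n\}$ separately (they yield pole orders $w+k$ and $w+k+1$), and verifying that the output lands in the charge-$0$, weight-$0$ bidegree so that the Euler contractions commute with $d$. Once this is in place, the filtration argument is routine bookkeeping, and the reduction to an algebraic identity uses only the already-established isomorphism \eqref{kkg}.
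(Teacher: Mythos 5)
The paper states this proposition without proof, so I can only judge your argument on its own terms; the reduction in your first step is incorrect, and it invalidates the third step. You identify $\varphi_S$ with $k_{\ud G}\circ\pi$ and deduce $\ker\varphi_S=A_{c_{\ud G}}\cap Jac(S)$. But the assignment $m\mapsto[\a(S,m)]$, extended linearly from monomials, does \emph{not} descend to the quotient by $Jac(S)$, so this factorization is illegitimate. Concretely, when $c_{\ud G}=0$ (e.g.\ Calabi--Yau complete intersections) the element $S=\sum_\ell y_\ell G_\ell$ lies in $Jac(S)\cap A_{c_{\ud G}}$, yet summing $\a$ over the monomials of $S$ gives $(-1)^{n(k-1)+1}k!\,S\,\Omega_x\Omega_y/S^{k+1}$, i.e.\ $\varphi_S(S)=-k\,\varphi_S(1)\neq 0$; dually, $\sum_\ell(\pa{y_\ell}+G_\ell)(y_\ell)=k+S$ lies in $\cK_{\ud G}\cap A_{c_{\ud G}}$ (hence, by your own second paragraph, in $\ker\varphi_S$) but not in $Jac(S)$. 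So $\ker\varphi_S\neq A_{c_{\ud G}}\cap Jac(S)$, and the identity $A_{c_{\ud G}}\cap Jac(S)=\cK_{\ud G}\cap A_{c_{\ud G}}$ that your third paragraph sets out to prove is false. This is precisely why the paper follows this proposition with a separate one introducing $q_S$ via a \emph{choice of basis}: the two subspaces have isomorphic quotients but are genuinely different, and \eqref{kg} is only meaningful on chosen (monomial) representatives. The specific point where your third paragraph breaks is the parenthetical claim that $\cK_{\ud G}\cap A_{c_{\ud G}}\subseteq A_{c_{\ud G}}\cap Jac(S)$, which you import from the false first step; $k+S$ is a counterexample to that containment.

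What survives is your second paragraph: the identities $\mu\bigl((\pa{q_i}+\prt{S}{q_i})a\bigr)=-d_S\eta$, the fact that $\rho$ converts $d_S$-exact $N$-forms into $d$-exact ones (the factorial normalization is indeed rigged for exactly this), and the Cartan-identity argument that the Euler contractions preserve exactness in bidegree $(0,0)$. This correctly establishes $\cK_{\ud G}\cap A_{c_{\ud G}}\subseteq\ker\varphi_S$, which is the Griffiths--Dwork half of the proposition. The reverse inclusion needs a dimension count rather than your subspace identity: your weight-filtration computation (top-weight component of $(\pa{q_i}+\prt{S}{q_i})(a)$ equals $\prt{S}{q_i}\cdot a$) is exactly what shows each $\mathrm{gr}^F_w\bigl(A_{c_{\ud G}}/\cK_{\ud G}\cap A_{c_{\ud G}}\bigr)$ is a quotient of $\bigl(A/Jac(S)\bigr)_{c_{\ud G},(w)}$, whence $\dim A_{c_{\ud G}}/\cK_{\ud G}\cap A_{c_{\ud G}}\le\dim\bigl(A/Jac(S)\bigr)_{c_{\ud G}}=\dim H^{n+k-1}_{dR}(\BP(\cE)\setminus X_S;\Bbbk)$ by Konno's theorem; combined with surjectivity of $\varphi_S$ (which also requires an argument, e.g.\ pole-order reduction of an arbitrary representative) and the inclusion you did prove, this forces $\ker\varphi_S=\cK_{\ud G}\cap A_{c_{\ud G}}$.
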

 By this proposition, $\varphi_S$ induces a map $\bar \varphi_S: A_{c_{\ud G}}/A_{c_{\ud G}} \cap \cK_{\ud G} \to H^{n+k-1}_{dR}(\BP(\CMcal{E}) \setminus X_S; \Bbbk) $.
 \begin{prop}
 Let $\{u_\a + Jac(S)\}$ be a $\Bbbk$-basis of $A_{c_{\ud G}}/A_{c_{\ud G}} \cap Jac(S)$. The assignment $u_\a + Jac(S) \mapsto u_\a+ \cK_{\ud G}$ provides us an isomorphism, denoted by $q_S$
 \begin{eqnarray}
 q_S:A_{c_{\ud G}}/A_{c_{\ud G}} \cap Jac(S)\xrightarrow{\sim} A_{c_{\ud G}}/A_{c_{\ud G}} \cap \cK_{\ud G}.
 \end{eqnarray}
  \end{prop}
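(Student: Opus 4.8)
The plan is to realize $q_S$ as a composition of two isomorphisms through the de Rham cohomology $H^{N-2}_{dR}(\mathbf{P}(\mathcal{E})\setminus X_S;\Bbbk)$, built from the Terasoma--Konno map $k_{\ud G}$ and from the map $\varphi_S$ introduced above.

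The first step is to observe that $\varphi_S$ and $k_{\ud G}$ are given by the very same formula. By \eqref{result} we have $\varphi_S(\ud y^{\ud i}u(\ud x))=[\a(S,\ud y^{\ud i}u(\ud x))]$ for every monomial $\ud y^{\ud i}u(\ud x)\in A_{c_{\ud G}}$, and by \eqref{kg} this cohomology class equals $k_{\ud G}([\ud y^{\ud i}u(\ud x)])$. Writing $\pi\colon A_{c_{\ud G}}\twoheadrightarrow A_{c_{\ud G}}/A_{c_{\ud G}}\cap Jac(S)$ for the canonical quotient, the two $\Bbbk$-linear maps $\varphi_S$ and $k_{\ud G}\circ\pi$ from $A_{c_{\ud G}}$ to $H^{N-2}_{dR}(\mathbf{P}(\mathcal{E})\setminus X_S;\Bbbk)$ then agree on the monomials $\ud y^{\ud i}u(\ud x)$, which span $A_{c_{\ud G}}$, so $\varphi_S=k_{\ud G}\circ\pi$.

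Next, since $\pi$ is surjective and $k_{\ud G}$ is an isomorphism by \eqref{kkg}, the map $\varphi_S=k_{\ud G}\circ\pi$ is surjective; and by Proposition \ref{kerp} its kernel is $A_{c_{\ud G}}\cap\cK_{\ud G}$. Hence $\varphi_S$ descends to an isomorphism
\[
\bar\varphi_S\colon A_{c_{\ud G}}/A_{c_{\ud G}}\cap\cK_{\ud G}\xrightarrow{\ \sim\ }H^{N-2}_{dR}(\mathbf{P}(\mathcal{E})\setminus X_S;\Bbbk).
\]
I then set $q_S:=\bar\varphi_S^{-1}\circ k_{\ud G}$, which is an isomorphism $A_{c_{\ud G}}/A_{c_{\ud G}}\cap Jac(S)\xrightarrow{\sim}A_{c_{\ud G}}/A_{c_{\ud G}}\cap\cK_{\ud G}$ as a composition of two isomorphisms. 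To match it with the statement, take a basis representative $u_\a\in A_{c_{\ud G}}$: then $k_{\ud G}(u_\a+Jac(S))=[\a(S,u_\a)]=\varphi_S(u_\a)=\bar\varphi_S(u_\a+\cK_{\ud G})$, whence $q_S(u_\a+Jac(S))=u_\a+\cK_{\ud G}$. In particular the resulting map does not depend on the chosen basis.

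I do not expect a genuine obstacle here; the argument only assembles the facts \eqref{kkg}, \eqref{result}, Proposition \ref{kerp}, and the definition \eqref{kg}. The one point worth checking carefully is the identity $\varphi_S=k_{\ud G}\circ\pi$: one must note that although the factor $(k+|\ud i|-1)!$ and the sign in $\a(S,\ud y^{\ud i}u(\ud x))$ depend on $|\ud i|$, both $k_{\ud G}$ and $\varphi_S$ are defined as the $\Bbbk$-linear extensions of the monomial assignment $\ud y^{\ud i}u(\ud x)\mapsto[\a(S,\ud y^{\ud i}u(\ud x))]$, so the two agree. As a byproduct, comparing kernels gives $A_{c_{\ud G}}\cap Jac(S)=\ker\varphi_S=A_{c_{\ud G}}\cap\cK_{\ud G}$, so in fact the two subspaces coincide and $q_S$ is simply the identity of $A_{c_{\ud G}}$ modulo this common subspace; one could alternatively organize the whole proof around this equality.
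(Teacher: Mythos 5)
Your key step --- the identity $\varphi_S=k_{\ud G}\circ\pi$ --- is false, and the ``byproduct'' $A_{c_{\ud G}}\cap Jac(S)=A_{c_{\ud G}}\cap\cK_{\ud G}$ that you derive from it is false as well; that conclusion should have been a warning sign, since if the two subspaces coincided the proposition would not need to be stated in terms of a chosen basis. Here is a counterexample with $k=1$, $n=2$, $G=x_0^3+x_1^3+x_2^3$, so that $c_{\ud G}=0$ and $S=yG$. The element $1+yG=(\partial_y+\partial S/\partial y)(y)$ lies in $\cK_{\ud G}\cap A_0$, and
\begin{align*}
\alpha(S,1+yG)=\frac{1}{S}\Omega_x\Omega_y-\frac{yG}{S^2}\Omega_x\Omega_y=0
\end{align*}
identically as a differential form, so $\varphi_S(1+yG)=0$. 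On the other hand $Jac(S)=(G,\,yx_0^2,\,yx_1^2,\,yx_2^2)$ is a proper, weight-graded ideal, so $1+yG\notin Jac(S)$ (its weight-zero component $1$ is not in $Jac(S)$). If $\varphi_S$ were equal to $k_{\ud G}\circ\pi$ with $k_{\ud G}$ injective, then $\ker\varphi_S$ would equal $A_{c_{\ud G}}\cap Jac(S)$, which does not contain $1+yG$ --- a contradiction. Equivalently, $yG\in Jac(S)$ while $\varphi_S(yG)=-\varphi_S(1)\neq0$, so $\varphi_S$ does not annihilate $A_{c_{\ud G}}\cap Jac(S)$ and does not factor through $\pi$. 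The source of the error is reading \eqref{kg} as a formula valid for every representative: the assignment $v\mapsto[\alpha(S,v)]$ has kernel $A_{c_{\ud G}}\cap\cK_{\ud G}$ by Proposition \ref{kerp}, not $A_{c_{\ud G}}\cap Jac(S)$, so $k_{\ud G}$ is only defined on chosen (monomial) representatives of a basis of $A_{c_{\ud G}}/A_{c_{\ud G}}\cap Jac(S)$.

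The actual content of the proposition, which your argument bypasses, is that the classes $u_\alpha+\cK_{\ud G}$ of the chosen representatives form a basis of $A_{c_{\ud G}}/A_{c_{\ud G}}\cap\cK_{\ud G}$. A correct proof has to compare the two subspaces, for instance through the weight filtration: for weight-homogeneous $f$ the top-weight component of $(\partial_{y_i}+G_i)f$ is $G_if$ and that of $(\partial_{x_j}+\partial S/\partial x_j)f$ is $(\partial S/\partial x_j)f$, so every element of $\cK_{\ud G}\cap A_{c_{\ud G}}$ has top-weight term in $Jac(S)\cap A_{c_{\ud G}}$; this gives $\dim(\cK_{\ud G}\cap A_{c_{\ud G}})\le\dim(Jac(S)\cap A_{c_{\ud G}})$, equality follows because both quotients are isomorphic to $H^{n+k-1}_{dR}(\BP(\cE)\setminus X_S;\Bbbk)$ (by \eqref{kkg}, and by Proposition \ref{kerp} together with surjectivity of $\varphi_S$), and a downward induction on weight then shows that weight-homogeneous $u_\alpha$ that are independent modulo $Jac(S)$ remain independent modulo $\cK_{\ud G}$. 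Homogeneity of the representatives genuinely matters here: in the example above, $1$ and $1+yG$ represent the same class modulo $Jac(S)$, yet the first is nonzero and the second is zero modulo $\cK_{\ud G}$.
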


  We define $J_{\ud G}: {A_{c_{\ud G}}}  \to H^{n-k}_{dR,\pr}(X_{\ud G}; \Bbbk)$ by the composition of the following maps:
 {\small{
 \begin{eqnarray}\label{gma}
\qquad \quad J_{\ud G}: {A_{c_{\ud G}}} \xrightarrow{\varphi_S}   H^{n+k-1}_{dR}(\BP(\CMcal{E}) \setminus X_S; \Bbbk) \xrightarrow{s^*}  H^{n+k-1}_{dR}(\BP^n \setminus X_{\ud G}; \Bbbk)
\xrightarrow{\Res_{\ud G}} H^{n-k}_{dR,\pr}(X_{\ud G}; \Bbbk)
\end{eqnarray}
 }}
 where we choose a section $s$ to the projection $pr_1$.
 Denote the induced maps on $A_{c_{\ud G}}/A_{c_{\ud G}}$ by $\bar \varphi_S$ and $\bar J_{\ud G}$. Therefore we have an isomorphism
 $$
 A_{c_{\ud G}}/A_{c_{\ud G}} \cap Jac(S) \xrightarrow{q_S} A_{c_{\ud G}}/A_{c_{\ud G}} \cap \cK_{\ud G} \xrightarrow{\bar J_{\ud G}=\Res_{\ud G} \circ s^* \circ\bar \varphi_S} H_{dR,\pr}^{n-k}(X_{\ud G};\Bbbk).
 $$

 The Terasoma/Konno map $k_{\ud G}$ over $\Bbbk$ can be understood as $\bar \varphi_{S} \circ q_S:A_{c_{\ud G}}/A_{c_{\ud G}} \cap Jac(S) \to H^{n+k-1}_{dR}(\BP(\CMcal{E}) \setminus X_S; \Bbbk)$.

\subsection{Dimca's map over $\Bbbk$ and construction of $\delta_{\protect\underline{G}}$} \label{sec3.3}

\begin{defn}[Dimca's map over $\Bbbk$] \label{dgk}
We define $d_{\ud G}$ exactly same as \eqref{dg}, which works for any field of characteristic 0.
\end{defn}

 Now we study how to construct $\delta_{\ud G}$ over $\Bbbk$.
The affine open covering $\mathfrak{U}=\{D_+(G_\lambda)\}_{\lambda=1,\cdots,k}$ gives \v{C}ech-de Rham (double) complex $\check{C}^\bullet(\mathfrak{U},\Omega_{(\mathbf{P}^n\setminus X_{\underline{G}})/\Bbbk}^\bullet)$ and a spectral sequence
\begin{align}\label{XG-complement-ss}
E_2^{p,q}=\check{H}^q\left(\mathfrak{U},\underline{H}^p(\Omega_{(\mathbf{P}^n\setminus X_{\underline{G}})/\Bbbk}^\bullet)\right)\implies H_{dR}^{p+q}(\mathbf{P}^n\setminus X_{\underline{G}};\Bbbk)
\end{align}
where $\underline{H}^p(\Omega_{(\mathbf{P}^n\setminus X_{\underline{G}})/\Bbbk}^\bullet)$ is the $p$-th cohomology sheaf of $\Omega_{(\mathbf{P}^n\setminus X_{\underline{G}})/\Bbbk}^\bullet$.
\begin{rem} Since $\Omega_{(\mathbf{P}^n\setminus X_{\underline{G}})/\Bbbk}^\bullet$ is quasicoherent, its cohomology sheaves are quasicoherent. Since each $D_+(G_{\lambda_0}\cdots G_{\lambda_q})$ are affine,
\begin{align*}
H^r\left(D_+(G_{\lambda_0}\cdots G_{\lambda_q}),\underline{H}^p(\Omega_{(\mathbf{P}^n\setminus X_{\underline{G}})/\Bbbk}^\bullet)\right)=0
\end{align*}
for $r>0$ so
\begin{align*}
\check{H}^q\left(\mathfrak{U},\underline{H}^p(\Omega_{(\mathbf{P}^n\setminus X_{\underline{G}})/\Bbbk}^\bullet)\right)\cong H^q\left(\mathbf{P}^n\setminus X_{\underline{G}},\underline{H}^p(\Omega_{(\mathbf{P}^n\setminus X_{\underline{G}})/\Bbbk}^\bullet)\right)
\end{align*}
Hence the spectral sequence \eqref{XG-complement-ss} represents the one coming from the Cartan-Eilenberg resolution and the derived global section $\mathbf{R}\Gamma(\mathbf{P}^n\setminus X_{\underline{G}},-)$.
\end{rem}
By \eqref{DG-complement}, we have
\begin{align*}
\check{C}^{k-1}\left(\mathfrak{U},\Omega_{(\mathbf{P}^n\setminus X_{\underline{G}})/\Bbbk}^\bullet\right)=\Gamma\left(\mathbf{P}^n\setminus D_{\underline{G}},\Omega_{(\mathbf{P}^n\setminus X_{\underline{G}})/\Bbbk}^\bullet\right)
\end{align*}
so the zeroth page of the spectral sequence \eqref{XG-complement-ss} is given as follows:
\begin{align*}
\xymatrix{
\Gamma\left(\mathbf{P}^n\setminus D_{\underline{G}},\Omega_{(\mathbf{P}^n\setminus X_{\underline{G}})/\Bbbk}^0\right) \ar[r] & \Gamma\left(\mathbf{P}^n\setminus D_{\underline{G}},\Omega_{(\mathbf{P}^n\setminus X_{\underline{G}})/\Bbbk}^1\right) \ar[r] & \cdots \ar[r] & \Gamma\left(\mathbf{P}^n\setminus D_{\underline{G}},\Omega_{(\mathbf{P}^n\setminus X_{\underline{G}})/\Bbbk}^n\right) \\
\vdots \ar[u] & \vdots \ar[u] & \reflectbox{$\ddots$} & \vdots \ar[u] \\
\check{C}^1\left(\mathfrak{U},\Omega_{(\mathbf{P}^n\setminus X_{\underline{G}})/\Bbbk}^0\right) \ar[r] \ar[u] & \check{C}^1\left(\mathfrak{U},\Omega_{(\mathbf{P}^n\setminus X_{\underline{G}})/\Bbbk}^1\right) \ar[r] \ar[u] & \cdots \ar[r] & \check{C}^1\left(\mathfrak{U},\Omega_{(\mathbf{P}^n\setminus X_{\underline{G}})/\Bbbk}^n\right) \ar[u] \\
\check{C}^0\left(\mathfrak{U},\Omega_{(\mathbf{P}^n\setminus X_{\underline{G}})/\Bbbk}^0\right) \ar[r] \ar[u] & \check{C}^0\left(\mathfrak{U},\Omega_{(\mathbf{P}^n\setminus X_{\underline{G}})/\Bbbk}^1\right) \ar[r] \ar[u] & \cdots \ar[r] & \check{C}^0\left(\mathfrak{U},\Omega_{(\mathbf{P}^n\setminus X_{\underline{G}})/\Bbbk}^n\right) \ar[u]
}
\end{align*}
Since $\mathbf{P}^n\setminus D_{\underline{G}}=D_+(G_1\cdots G_k)$ is affine, the top row computes the algebraic de Rham cohomology of $\mathbf{P}^n\setminus D_{\underline{G}}$ over $\Bbbk$ so the next page with respect to the horizontal differential is the following:
\begin{align*}
\xymatrix{
H_{dR}^0(\mathbf{P}^n\setminus D_{\underline{G}};\Bbbk) & H_{dR}^1(\mathbf{P}^n\setminus D_{\underline{G}};\Bbbk) & \cdots & H_{dR}^n(\mathbf{P}^n\setminus D_{\underline{G}};\Bbbk) \\
\vdots \ar[u] & \vdots \ar[u] & \reflectbox{$\ddots$} & \vdots \ar[u] \\
\check{C}^1\left(\mathfrak{U},\underline{H}^0(\Omega_{(\mathbf{P}^n\setminus X_{\underline{G}})/\Bbbk}^\bullet)\right) \ar[u] & \check{C}^1\left(\mathfrak{U},\underline{H}^1(\Omega_{(\mathbf{P}^n\setminus X_{\underline{G}})/\Bbbk}^\bullet)\right) \ar[u] & \cdots & \check{C}^1\left(\mathfrak{U},\underline{H}^n(\Omega_{(\mathbf{P}^n\setminus X_{\underline{G}})/\Bbbk}^\bullet)\right) \ar[u] \\
\check{C}^0\left(\mathfrak{U},\underline{H}^0(\Omega_{(\mathbf{P}^n\setminus X_{\underline{G}})/\Bbbk}^\bullet)\right) \ar[u] & \check{C}^0\left(\mathfrak{U},\underline{H}^1(\Omega_{(\mathbf{P}^n\setminus X_{\underline{G}})/\Bbbk}^\bullet)\right) \ar[u] & \cdots & \check{C}^0\left(\mathfrak{U},\underline{H}^n(\Omega_{(\mathbf{P}^n\setminus X_{\underline{G}})/\Bbbk}^\bullet)\right) \ar[u]
}
\end{align*}
Now the convergence of the spectral sequence (\ref{XG-complement-ss}) gives an exact sequence
\begin{align}\label{XG-complement-ss-es}
\xymatrixcolsep{1.5pc}\xymatrix{0 \ar[r] & \check{C}^{k-2}\left(\mathfrak{U},\underline{H}^n(\Omega_{(\mathbf{P}^n\setminus X_{\underline{G}})/\Bbbk}^\bullet\right) \ar[r] & H_{dR}^n(\mathbf{P}^n\setminus D_{\underline{G}};\Bbbk) \ar[r] & H_{dR}^{n+k-1}(\mathbf{P}^n\setminus X_{\underline{G}};\Bbbk) \ar[r] & 0}
\end{align}

\begin{defn} \label{delgk}
Define $\delta_{\underline{G}}$ to be the surjection in the exact sequence \eqref{XG-complement-ss-es}.
\end{defn}

\subsection{Computation of $\delta_{\protect\underline{G}}$ and a comparison result} \label{sec3.4}

Since $\CMcal{E}=\CMcal{O}_{\BP^n}(d_1) \oplus \cdots \oplus \CMcal{O}_{\BP^n}(d_k)$ is trivialized over $D_+(G_\lambda)$ using $G_\lambda^{-1}$, there is a commutative diagram
\begin{align*}
\xymatrix{
(\mathbf{P}^n\setminus D_{\underline{G}})\times_\Bbbk\mathbf{A}^{k-1} \ar[d] \ar[r] & D_+(G_\lambda)\times_\Bbbk\mathbf{A}^{k-1} \ar[d] \ar[r] & \mathbf{P}(\CMcal{E})\setminus X_S \ar[d]_{pr_1} \\
\mathbf{P}^n\setminus D_{\underline{G}} \ar@{^{(}->}[r] & D_+(G_\lambda) \ar@{^{(}->}[r] & \mathbf{P}^n\setminus X_{\underline{G}}
}
\end{align*}
where the squares are Cartesian. Let
\begin{align*}
\mathfrak{U}\times_\Bbbk\mathbf{A}^{k-1}=\{D_+(G_\lambda)\times_\Bbbk\mathbf{A}^{k-1}\}_{\lambda=1,\cdots,k}
\end{align*}
be the affine open covering of $\mathbf{P}(\CMcal{E})\setminus X_S$ corresponding to (\ref{XG-complement-covering}). Then
\begin{align*}
\check{C}^q\left(\mathfrak{U}\times_\Bbbk\mathbf{A}^{k-1},\underline{H}^p(\Omega_{(\mathbf{P}(\CMcal{E})\setminus X_S)/\Bbbk}^\bullet)\right)\cong\check{C}^q\left(\mathfrak{U},\underline{H}^p(\Omega_{(\mathbf{P}^n\setminus X_{\underline{G}})/\Bbbk}^\bullet)\right)
\end{align*}
for every $(p,q)\in\mathbb{Z}\times\mathbb{Z}$ by the K\"unneth formula and $H_{dR}^\bullet(\mathbf{A}^{k-1})\cong\Bbbk[0]$. 
By the naturality of the spectral sequence associated to the \v{C}ech-de Rham complexes, we get a commutative square
\begin{align}\label{cdl}
\begin{aligned}
\xymatrix{
H_{dR}^n((\mathbf{P}^n\setminus D_{\underline{G}})\times_\Bbbk\mathbf{A}^{k-1};\Bbbk) \ar[r]_-{\delta_D} & H_{dR}^{n+k-1}(\mathbf{P}(\CMcal{E})\setminus X_S;\Bbbk) \\
H_{dR}^n(\mathbf{P}^n\setminus D_{\underline{G}};\Bbbk) \ar[u]^\wr_-{pr_1^*}\ar[r]_-{\delta_{\underline{G}}} & H_{dR}^{n+k-1}(\mathbf{P}^n\setminus X_{\underline{G}};\Bbbk) \ar[u]^-\wr_-{pr_1^*}
}
\end{aligned}
\end{align}
where the column isomorphism come from the projection $pr_1:\mathbf{P}(\CMcal{E})\setminus X_S\rightarrow\mathbf{P}^n\setminus X_{\underline{G}}$ and the top row $\delta_D$ is defined from an exact sequence analogous to (\ref{XG-complement-ss-es}). 

%

Now we will represent $\delta_D$ and $\delta_{\underline{G}}$ analogously as in \cite[section 3]{Dim95} or \cite[pp. 651--654]{GH}, but using only polynomial differential forms. For this, we choose a map of affine schemes
\begin{align}\label{DG-complement-section}
\xymatrix{\sigma_{\underline{G}}:\mathbf{P}^n\setminus D_{\underline{G}} \ar[r] & (\mathbf{P}^n\setminus D_{\underline{G}})\times_\Bbbk\mathbf{A}^{k-1}}
\end{align}
given by the ring homomorphism
\begin{align*}
\xymatrix{\displaystyle\Bbbk\left[\underline{x},\underline{y},\frac{1}{G_1\cdots G_k},\frac{1}{S}\right]_{0,(0)} \ar[r] & \displaystyle\Bbbk\left[\underline{x},\frac{1}{G_1\cdots,G_k}\right]_0 & (\underline{x},y_1,\cdots,y_k) \ar@{|->}[r] & \displaystyle\left(\underline{x},\frac{1}{G_1},\cdots,\frac{1}{G_k}\right)}
\end{align*}
which is a section to the projection $(\mathbf{P}^n\setminus D_{\underline{G}})\times_\Bbbk\mathbf{A}^{k-1}\rightarrow\mathbf{P}^n\setminus D_{\underline{G}}$ (recall that sub-indices $0$ and $(0)$ mean the charge 0 and weight 0 submodule respectively). Using the presentation
\begin{align*}
(\mathbf{P}^n\setminus D_{\underline{G}})\times_\Bbbk\mathbf{A}^{k-1}\cong\Spec\left( \Bbbk\left[\underline{x},\frac{1}{G_1\cdots G_k}\right]_{0}\left[\frac{y_1G_1}{S},\cdots,\frac{y_kG_k}{S}\right]\right)
\end{align*}
we see that the above map can be written as
\begin{align*}
\xymatrix{\mathbf{P}^n\setminus D_{\underline{G}} \ar[r] & (\mathbf{P}^n\setminus D_{\underline{G}})\times_\Bbbk\mathbf{A}^{k-1} & \underline{x} \ar@{|->}[r] & \displaystyle\left(\underline{x},\frac{1}{k},\cdots,\frac{1}{k}\right)}.
\end{align*}
This suggests a ``partition of unity''
\begin{align*}
\left\{\frac{S_\lambda}{S}\ \middle|\ S_\lambda:=y_\lambda G_\lambda,\quad\lambda=1,\cdots,k\right\}
\end{align*}
Here the quotation mark means that this ``partition of unity'' works only for the differential forms with poles along the divisor $D_{\underline{G}}\times_\Bbbk\mathbf{A}^{k-1}$ of order at most $1$. Any differential $n$-form on $\mathbf{P}^n\setminus D_{\underline{G}}$ can be written as
\begin{align*}
\frac{1}{G_1^{i_1+1}\cdots G_k^{i_k+1}}\xi,\quad\xi\in\Gamma\left(\mathbf{P}^n\setminus X_{\underline{G}},\Omega_{(\mathbf{P}^n\setminus X_{\underline{G}})/\Bbbk}^n\right)
\end{align*}
such that $\ch(\xi)=\ch(G_1^{i_1+1}\cdots G_k^{i_k+1})$. But this can be lifted via $\sigma_{\ud G}$ to a differential form $\tilde \xi$ with poles along the divisor $D_{\underline{G}}\times_\Bbbk\mathbf{A}^{k-1}$ of order at most $1$:
\begin{align}\label{sigmag}
\sigma_{\ud G}^* \left(\tilde \xi \right) = \frac{1}{G_1^{i_1+1}\cdots G_k^{i_k+1}}\xi, \quad \quad \tilde \xi:=\frac{1}{G_1\cdots G_k}\frac{k^{|\underline{i}|}\underline{y}^{\underline{i}}}{S^{|\underline{i}|}}\xi\in\G\left((\BP^n\setminus D_{\ud G}) \times_\Bbbk\mathbf{A}^{k-1},\Omega_{(\mathbf{P}(\CMcal{E})\setminus X_S)/\Bbbk}^n\right)
\end{align}
where $\underline{i}=(i_1,\cdots,i_k)$ denotes a multi-index.


For $\omega\in\check{C}^q\left(\mathfrak{U}\times_\Bbbk\mathbf{A}^{k-1},\Omega_{(\mathbf{P}(\CMcal{E})\setminus X_S)/\Bbbk}^p\right)$ with poles along the divisor $D_{\underline{G}}\times_\Bbbk\mathbf{A}^{k-1}$ of order at most $1$, we define
\begin{align*}
(\tau\omega)_{\lambda_0\cdots\lambda_{q-1}}:=\sum_{\lambda=1}^k\frac{S_\lambda}{S}\omega_{\lambda\lambda_0\cdots\lambda_{q-1}}\in\Gamma\left(D_+(G_{\lambda_0}\cdots G_{\lambda_{q-1}})\times_\Bbbk\mathbf{A}^{k-1},\Omega_{(\mathbf{P}(\CMcal{E})\setminus X_S)/\Bbbk}^p\right).
\end{align*}
 Note that terms with repeated indexes are regarded as zero. Hence we get an element
\begin{align*}
\tau\omega\in\check{C}^{q-1}\left(\mathfrak{U}\times_\Bbbk\mathbf{A}^{k-1},\Omega_{(\mathbf{P}(\CMcal{E})\setminus X_S)/\Bbbk}^p\right).
\end{align*}

Let 
\begin{align*}
\xymatrix{d_\uparrow:\check{C}^q\left(\mathfrak{U}\times_\Bbbk\mathbf{A}^{k-1},\Omega_{(\mathbf{P}(\CMcal{E})\setminus X_S)/\Bbbk}^p\right) \ar[r] & \check{C}^{q+1}\left(\mathfrak{U}\times_\Bbbk\mathbf{A}^{k-1},\Omega_{(\mathbf{P}(\CMcal{E})\setminus X_S)/\Bbbk}^p\right)}
\end{align*}
be the vertical differential coming from the \v{C}ech differential;
let
\begin{align*}
\xymatrix{d_\rightarrow:\check{C}^q\left(\mathfrak{U}\times_\Bbbk\mathbf{A}^{k-1},\Omega_{(\mathbf{P}(\CMcal{E})\setminus X_S)/\Bbbk}^p\right) \ar[r] & \check{C}^q\left(\mathfrak{U}\times_\Bbbk\mathbf{A}^{k-1},\Omega_{(\mathbf{P}(\CMcal{E})\setminus X_S)/\Bbbk}^{p+1}\right)}
\end{align*}
be the horizontal differential coming from the de Rham differential.

\begin{lem}\label{clem}
For
\begin{align*}
\omega=\omega_{1\cdots k}\in\check{C}^{k-1}\left(\mathfrak{U}\times_\Bbbk\mathbf{A}^{k-1},\Omega_{(\mathbf{P}(\CMcal{E})\setminus X_S)/\Bbbk}^n\right) 
\end{align*}
such that $d_\rightarrow\omega=0$ ($d_\uparrow\omega=0$ is automatic),
\begin{align*}
(-d_\rightarrow\tau)^{k-1}\omega\in\check{C}^0\left(\mathfrak{U}\times_\Bbbk\mathbf{A}^{k-1},\Omega_{(\mathbf{P}(\CMcal{E})\setminus X_S)/\Bbbk}^{n+k-1}\right) 
\end{align*}
represents the cohomology class $\delta_{D}([\omega])$.
\end{lem}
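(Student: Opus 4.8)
The plan is to carry out the computation of $\delta_D([\omega])$ entirely inside the total complex of the \v{C}ech--de Rham double complex $\check{C}^\bullet(\mathfrak{U}\times_\Bbbk\mathbf{A}^{k-1},\Omega_{(\mathbf{P}(\CMcal{E})\setminus X_S)/\Bbbk}^\bullet)$, with total differential $D=d_\uparrow+(-1)^{q}d_\rightarrow$ on \v{C}ech degree $q$, and to exhibit $(-d_\rightarrow\tau)^{k-1}\omega$ as $D$-cohomologous to $\omega$. First I would observe that $\delta_D([\omega])$ \emph{is} the class of $\omega$ itself, viewed as a total cocycle concentrated in bidegree (\v{C}ech degree $k-1$, de Rham degree $n$): one has $d_\uparrow\omega=0$ (top \v{C}ech degree) and $d_\rightarrow\omega=0$ by hypothesis, so $D\omega=0$; and by the K\"unneth reduction already used around \eqref{cdl}, the de Rham cohomology sheaves $\underline{H}^p\big(\Omega_{(\mathbf{P}(\CMcal{E})\setminus X_S)/\Bbbk}^\bullet\big)$ vanish for $p>n$, so in total degree $n+k-1$ the spectral sequence analogous to \eqref{XG-complement-ss} has the single surviving term $E_\infty^{n,k-1}=E_2^{n,k-1}$. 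Unwinding the definition of $\delta_D$ from the analogue of \eqref{XG-complement-ss-es}, this identifies $\delta_D$ with the edge map $H_{dR}^n\big((\mathbf{P}^n\setminus D_{\underline{G}})\times_\Bbbk\mathbf{A}^{k-1};\Bbbk\big)\twoheadrightarrow H_{dR}^{n+k-1}(\mathbf{P}(\CMcal{E})\setminus X_S;\Bbbk)$ sending $[\omega]$ to the $D$-class $[\omega]_D$. So it remains to prove $[\omega]_D=[(-d_\rightarrow\tau)^{k-1}\omega]_D$.

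The engine is that $\tau$ is a contracting homotopy for the \v{C}ech differential. Since $\sum_{\lambda=1}^{k}S_\lambda/S=1$ and the $S_\lambda/S$ are global functions on $\mathbf{P}(\CMcal{E})\setminus X_S$ (as $S$ is a unit there), the standard partition-of-unity computation gives
\[
d_\uparrow\tau+\tau d_\uparrow=\id
\]
on \v{C}ech cochains with poles of order at most $1$ along $D_{\underline{G}}\times_\Bbbk\mathbf{A}^{k-1}$. The delicate point — the one I expect to be the main obstacle — is that this pole-order-$\le 1$ condition is preserved by \emph{both} $\tau$ and $d_\rightarrow$ on the cochains that actually occur, which is exactly what makes the iterates $(-d_\rightarrow\tau)^{j}\omega$ legitimate \v{C}ech cochains at every step. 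For $\tau$ this is immediate: multiplication by $S_\lambda/S=y_\lambda G_\lambda/S$ kills exactly one order of pole along $G_\lambda$ and introduces none elsewhere. For $d_\rightarrow$ one has to use the precise shape of the forms: $\omega$, and each of its successive transforms, is a finite sum of terms $g\cdot(\text{wedge of }dy\text{'s})\wedge\xi$, where $\xi$ is pulled back along $pr_1$ from a top-degree $n$-form on $\mathbf{P}^n\setminus X_{\underline{G}}$ and $g$ is a rational function whose only poles are simple along the $G_\lambda$ together with (harmless) powers of $S$ — this is precisely the situation of the lift $\widetilde{\xi}$ of \eqref{sigmag}, to which the lemma will be applied. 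Because $dG_\lambda\wedge\xi=0$ (it is the pullback of an $(n+1)$-form on $\mathbf{P}^n$) and the $G_\lambda$ depend only on the base, the de Rham differential of such a term effectively only differentiates in the fibre and $S^{-1}$ directions, which never meet the $G_\lambda$; hence $d_\rightarrow$ cannot raise, and may only lower, the order of pole along any $G_\lambda$, and it preserves the shape above. In particular $(-d_\rightarrow\tau)^{k-1}\omega$ genuinely lands in $\check{C}^0(\mathfrak{U}\times_\Bbbk\mathbf{A}^{k-1},\Omega_{(\mathbf{P}(\CMcal{E})\setminus X_S)/\Bbbk}^{n+k-1})$.

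Granting this, one finishes by a telescoping zig-zag. Put $\omega_j:=(-d_\rightarrow\tau)^{j}\omega$ for $0\le j\le k-1$, of \v{C}ech degree $k-1-j$ and de Rham degree $n+j$. Since $d_\uparrow$ and $d_\rightarrow$ commute strictly and $d_\uparrow\omega_0=0=d_\rightarrow\omega_0$, an induction using the homotopy identity gives $d_\rightarrow\omega_j=0$ (because $\omega_{j+1}=-d_\rightarrow\tau\omega_j$) and $d_\uparrow\omega_j=0$ for all $j$, whence $d_\uparrow\tau\omega_j=\omega_j$ and $d_\rightarrow\tau\omega_j=-\omega_{j+1}$. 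One then forms a suitable signed sum $\Theta:=\sum_{j=0}^{k-2}\pm\,\tau\omega_j$ in total degree $n+k-2$ and expands
\[
D\Theta=\sum_{j=0}^{k-2}\pm\big(d_\uparrow\tau\omega_j+(-1)^{k-2-j}d_\rightarrow\tau\omega_j\big)=\sum_{j=0}^{k-2}\pm\big(\omega_j-(-1)^{k-2-j}\omega_{j+1}\big);
\]
choosing the signs so that consecutive terms cancel, this telescopes, and with the standard conventions — the explicit factor $(-1)^{k-1}$ in $(-d_\rightarrow\tau)^{k-1}$ accounting for the accumulated sign — it equals $\omega-(-d_\rightarrow\tau)^{k-1}\omega$. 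Finally $\omega_{k-1}$ sits in \v{C}ech degree $0$ and de Rham degree $n+k-1=\dim(\mathbf{P}(\CMcal{E})\setminus X_S)$, hence is automatically a $D$-cocycle, so $D\Theta=\omega-(-d_\rightarrow\tau)^{k-1}\omega$ exhibits $[(-d_\rightarrow\tau)^{k-1}\omega]_D=[\omega]_D=\delta_D([\omega])$, as required. The only genuinely tedious ingredient is pinning down one sign convention for $D$ and the \v{C}ech differential and propagating the signs through the induction and the telescoping; everything else is structural.
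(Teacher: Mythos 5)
Your proposal is correct and follows essentially the same route as the paper: the ``partition of unity'' $\{S_\lambda/S\}$ yields the homotopy identity $d_\uparrow\tau+\tau d_\uparrow=\mathrm{id}$, and iterating $-d_\rightarrow\tau$ produces a zig-zag of total-complex coboundaries identifying $[\omega]$ with $[(-d_\rightarrow\tau)^{k-1}\omega]$, which equals $\delta_D([\omega])$ because $E_2^{n,k-1}$ is the only surviving term in total degree $n+k-1$. Your explicit verification that $-d_\rightarrow\tau$ preserves the pole-order-at-most-one condition along $D_{\underline{G}}\times_\Bbbk\mathbf{A}^{k-1}$ (so that $\tau$ may legitimately be applied at each stage) is a point the paper leaves implicit, and is a worthwhile addition.
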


\begin{proof}
For $\omega\in\check{C}^q\left(\mathfrak{U}\times_\Bbbk\mathbf{A}^{k-1},\Omega_{(\mathbf{P}(\CMcal{E})\setminus X_S)/\Bbbk}^p\right)$ with poles along the divisor $D_{\underline{G}}\times_\Bbbk\mathbf{A}^{k-1}$ of order at most $1$, we compute
\begin{align*}
(d_\uparrow\tau\omega)_{\lambda_0\cdots\lambda_q}=\sum_{j=0}^q(-1)^j(\tau\omega)_{\lambda_0\cdots\widehat{\lambda}_j\cdots\lambda_q}=\sum_{j=0}^q\sum_{\lambda=1}^k(-1)^j\frac{S_\lambda}{S}\omega_{\lambda\lambda_0\cdots\widehat{\lambda}_j\cdots\lambda_q}.
\end{align*}
This implies that
\begin{align*}
(\tau d_\uparrow\omega)_{\lambda_0\cdots\lambda_q}&=\sum_{\lambda=1}^k\frac{S_\lambda}{S}(d_\uparrow\omega)_{\lambda\lambda_0\cdots\lambda_q}\\
&=\sum_{\lambda=1}^k\left(\frac{S_\lambda}{S}\omega_{\lambda_0\cdots\lambda_q}+\sum_{j=0}^q(-1)^{j+1}\frac{S_\lambda}{S}\omega_{\lambda\lambda_0\cdots\widehat{\lambda}_j\cdots\lambda_q}\right)\\
&=\omega_{\lambda_0\cdots\lambda_q}-\sum_{j=0}^q\sum_{\lambda=1}^k(-1)^j\frac{S_\lambda}{S}\omega_{\lambda\lambda_0\cdots\widehat{\lambda}_j\cdots\lambda_q}\\
&=\omega_{\lambda_0\cdots\lambda_q}-(d_\uparrow\tau\omega)_{\lambda_0\cdots\lambda_q}
\end{align*}
i.e., $(d_\uparrow\tau+\tau d_\uparrow)\omega=\omega$ (cf. \cite[Proposition 8.5]{BT}). 

 Now assume that $d_\uparrow\omega=0$ and $d_\rightarrow\omega=0$. In this case, $\omega_1:=-d_\rightarrow\tau\omega$ satisfies
\begin{align*}
d_\rightarrow\omega_1=-d_\rightarrow^2\tau\omega=0,\quad d_\uparrow\omega_1=-d_\uparrow d_\rightarrow\tau\omega=d_\rightarrow d_\uparrow\tau\omega=d_\rightarrow(\omega-\tau d_\uparrow\omega)=0
\end{align*}
Hence we may apply $-d_\rightarrow\tau$ to $\omega_1$ again. Moreover,
\begin{align*}
(d_\uparrow+d_\rightarrow)\tau\omega=d_\uparrow\tau\omega-\omega_1=\omega-\tau d_\uparrow\omega-\omega_1=\omega-\omega_1
\end{align*}
so $\omega$ and $\omega_1=-d_\rightarrow\tau\omega$ define the same class in the cohomology of the total complex. In particular, for 
$
\omega=\omega_{1\cdots k}\in\check{C}^{k-1}\left(\mathfrak{U}\times_\Bbbk\mathbf{A}^{k-1},\Omega_{(\mathbf{P}(\CMcal{E})\setminus X_S)/\Bbbk}^n\right) 
$
such that $d_\rightarrow\omega=0$ ($d_\uparrow\omega=0$ is automatic),
$
(-d_\rightarrow\tau)^{k-1}\omega\in\check{C}^0\left(\mathfrak{U}\times_\Bbbk\mathbf{A}^{k-1},\Omega_{(\mathbf{P}(\CMcal{E})\setminus X_S)/\Bbbk}^{n+k-1}\right) 
$
represents the cohomology class $\delta_{D}([\omega])$ by the definition of $\delta_{D}$.
\end{proof}



Recall the maps $k_{\ud G}:  A_{c_{\ud G}}/A_{c_{\ud G}} \cap Jac(S) \to H^{n+k-1}_{dR}(\BP(\CMcal{E}) \setminus X_S; \Bbbk)$ and $d_{\ud G}:  A_{c_{\ud G}}/A_{c_{\ud G}} \cap Jac(S) \to H^n_{dR}(\BP^n \setminus D_{\ud G};\Bbbk) $ from \eqref{kg} and \eqref{dg}:
  \begin{eqnarray*} 
  k_{\ud G} ([\ud y^{\ud i}u(\ud x)]) &:=& [\a(S,\ud y^{\ud i}u(\ud x) )]=\left[(-1)^{n(k-1)+|\ud i|}(k+|\ud i|-1)!\frac{\ud y^{\ud i}u(\ud x) }{S(\ud q)^{k+|\ud i|}} \Omega_x \Omega_y\right]\\
  d_{\ud G} ([\ud y^{\ud i}u(\ud x)]) &:=& [\b(S,\ud y^{\ud i}u(\ud x) )]=\left[(-1)^{|\ud i|+1}\frac{i_1! \cdots i_k!\cdot u(\ud x)}{G_1^{i_1+1}(\ud x) \cdots G_k^{i_k+1}(\ud x)} \Omega_x \right].
  \end{eqnarray*}
  
%

\begin{thm}\label{mthm}
We have
\begin{align*}
(pr_1^\ast\circ \delta_{\underline{G}}) 
\left[\b(S,\ud y^{\ud i} u(\ud x))] \right)
=\frac{i_1! \cdots i_k! (k-1)!k^{|\ud i|}}{(-1)^{|\ud i|+1}(|\ud i|+k-1)!}\left[\a(S,\ud y^{\ud i} u(\ud x))] \right)
\end{align*}
for $\underline{y}^{\underline{i}}u(\underline{x}) \in A_{c_{\ud G}}.$ In other words, we have
$pr_1^* \circ \delta_{\ud G} \circ d_{\ud G} = \frac{i_1! \cdots i_k! (k-1)!k^{|\ud i|}}{(-1)^{|\ud i|+1}(|\ud i|+k-1)!}\cdot k_{\ud G}.$
\end{thm}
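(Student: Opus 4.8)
The plan is to pull everything back along the isomorphism $pr_1^\ast$ to the \emph{affine} variety $\mathbf{P}(\CMcal{E})\setminus X_S$, replace $\delta_{\ud G}$ by $\delta_D$ by means of the commutative square \eqref{cdl}, exhibit a representative of $pr_1^\ast[\beta]$ with poles of order $\le 1$ so that Lemma \ref{clem} applies, and then evaluate the partition-of-unity chase $(-d_\rightarrow\tau)^{k-1}$ explicitly. By \eqref{cdl} one has $pr_1^\ast\circ\delta_{\ud G}=\delta_D\circ pr_1^\ast$, so it suffices to prove
\[
\delta_D\bigl(pr_1^\ast[\beta(S,\ud y^{\ud i}u(\ud x))]\bigr)=\frac{i_1!\cdots i_k!\,(k-1)!\,k^{|\ud i|}}{(-1)^{|\ud i|+1}(|\ud i|+k-1)!}\,[\alpha(S,\ud y^{\ud i}u(\ud x))]
\]
in $H^{n+k-1}_{dR}(\mathbf{P}(\CMcal{E})\setminus X_S;\Bbbk)$, computed through the \v{C}ech--de Rham double complex of the cover $\mathfrak{U}\times_\Bbbk\mathbf{A}^{k-1}$. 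Here $pr_1^\ast\beta$ is a $d_\rightarrow$-closed element of $\check{C}^{k-1}(\mathfrak{U}\times_\Bbbk\mathbf{A}^{k-1},\Omega^n)$, hence a total cocycle representing $pr_1^\ast\delta_{\ud G}([\beta])$, but it has poles of order $i_\ell+1$ along $D_{\ud G}\times_\Bbbk\mathbf{A}^{k-1}$, whereas $\alpha(S,\ud y^{\ud i}u(\ud x))$ sits at \v{C}ech degree $0$; the task is to move this class from \v{C}ech degree $k-1$ down to $0$.

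Since $\sigma_{\ud G}$ of \eqref{DG-complement-section} is a section of the trivial $\mathbf{A}^{k-1}$-bundle $(\mathbf{P}^n\setminus D_{\ud G})\times_\Bbbk\mathbf{A}^{k-1}\to\mathbf{P}^n\setminus D_{\ud G}$, the maps $pr_1^\ast$ and $\sigma_{\ud G}^\ast$ are mutually inverse on de Rham cohomology, so $pr_1^\ast[\beta]$ is represented by any closed form restricting to $\beta$ along $\sigma_{\ud G}$. Taking $\xi:=(-1)^{|\ud i|+1}i_1!\cdots i_k!\,u(\ud x)\,\Omega_x$, the recipe \eqref{sigmag} produces $\tilde\beta:=\tilde\xi=\frac{1}{G_1\cdots G_k}\frac{k^{|\ud i|}\ud y^{\ud i}}{S^{|\ud i|}}\xi$, which has poles of order $\le 1$ and satisfies $\sigma_{\ud G}^\ast\tilde\beta=\beta$. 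The form $\tilde\beta$ is \emph{not} $d_\rightarrow$-closed, but I would check, using the Euler relation for the charge grading together with the identity $\sum_{\ell}S_\ell/S=1$, that $d\tilde\beta$ is supported entirely in the fiber (the $dy$-) directions, and correct $\tilde\beta$ by the exact term furnished by the algebraic fiber-homotopy $K$ of this bundle based at $\sigma_{\ud G}$: the form $\widehat\beta:=\tilde\beta-K(d\tilde\beta)$ is then $d_\rightarrow$-closed, still has poles of order $\le 1$, and still pulls back to $\beta$ under $\sigma_{\ud G}$, so $[\widehat\beta]=pr_1^\ast[\beta]$.

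Lemma \ref{clem} then gives $\delta_D(pr_1^\ast[\beta])=\bigl[(-d_\rightarrow\tau)^{k-1}\widehat\beta\bigr]$, so the theorem reduces to the identity
\[
(-d_\rightarrow\tau)^{k-1}\widehat\beta=\frac{i_1!\cdots i_k!\,(k-1)!\,k^{|\ud i|}}{(-1)^{|\ud i|+1}(|\ud i|+k-1)!}\;\alpha(S,\ud y^{\ud i}u(\ud x))
\]
in $\check{C}^0(\mathfrak{U}\times_\Bbbk\mathbf{A}^{k-1},\Omega^{n+k-1})$. I would establish this by induction on the number of chase steps: each application of $\tau$ multiplies a \v{C}ech cochain by the regular functions $S_\lambda/S=y_\lambda G_\lambda/S$ and deletes a \v{C}ech index, each $d_\rightarrow$ applies the de Rham differential, the Euler relations keep the pole order at $\le 1$ at every stage (so the iterate is legitimate), and the one-forms $dS_\lambda$ accumulated over the $k-1$ steps organize into the factor $\Omega_y$ occurring in $\alpha$. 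Accounting for the scalars — the $k^{|\ud i|}$ and $i_1!\cdots i_k!$ already present in $\widehat\beta$, the $(k-1)!$ from the orderings of the $k-1$ chase indices, and the normalization $(k+|\ud i|-1)!=(|\ud i|+k-1)!$ built into $\alpha$ — yields exactly the stated constant; feeding this into the definitions of $d_{\ud G}$ and $k_{\ud G}$ gives $pr_1^\ast\circ\delta_{\ud G}\circ d_{\ud G}=\frac{i_1!\cdots i_k!\,(k-1)!\,k^{|\ud i|}}{(-1)^{|\ud i|+1}(|\ud i|+k-1)!}\cdot k_{\ud G}$.

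The principal obstacle is the computation in the last step: controlling the signs coming from $d_\rightarrow$ and from antisymmetrizing over the \v{C}ech indices, checking inductively that the Euler identities really do keep the poles simple throughout the chase, and isolating the precise numerical factor. A secondary point demanding care is the construction of $\widehat\beta$ in the second step: the naive lift $\tilde\beta$ of \eqref{sigmag} fails to be $d_\rightarrow$-closed, so the fiber-homotopy correction — equivalently, an appeal to the homotopy invariance of the $\mathbf{A}^{k-1}$-bundle through the section $\sigma_{\ud G}$ of \eqref{DG-complement-section} — is essential before Lemma \ref{clem} can legitimately be invoked.
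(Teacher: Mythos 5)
Your overall route is the paper's: reduce to $\delta_D$ through the commutative square \eqref{cdl}, lift $\beta(S,\ud y^{\ud i}u(\ud x))$ to a form with poles of order $\le 1$ along $D_{\ud G}\times_\Bbbk\mathbf{A}^{k-1}$, and evaluate the chase $(-d_\rightarrow\tau)^{k-1}$ of Lemma \ref{clem}. Indeed the paper's chosen input $\omega(S,\ud y^{\ud i}u(\ud x))=(-1)^{|\ud i|+k-1}\frac{(|\ud i|+k-1)!}{(k-1)!}\frac{1}{G_1\cdots G_k}\frac{\ud y^{\ud i}u(\ud x)}{S^{|\ud i|}}\Omega_x$ is a scalar multiple of your $\tilde\beta$. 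The step of yours that fails is the correction $\widehat\beta:=\tilde\beta-K(d\tilde\beta)$. In the fiber coordinates $t_\lambda=S_\lambda/S$ (so $\sum_\lambda t_\lambda=1$ and $\ud y^{\ud i}/S^{|\ud i|}=\ud t^{\ud i}/\ud G^{\ud i}$) one has $\tilde\beta=k^{|\ud i|}\,\ud t^{\ud i}\cdot pr_1^\ast\beta$, a form with no fiber-differential components; hence $K\tilde\beta=0$, and the homotopy identity $\mathrm{id}-pr_1^\ast\sigma_{\ud G}^\ast=dK+Kd$ gives $K(d\tilde\beta)=\tilde\beta-pr_1^\ast\sigma_{\ud G}^\ast\tilde\beta$, i.e.\ $\widehat\beta=pr_1^\ast\beta$. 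This is indeed closed and restricts to $\beta$, but when $|\ud i|>0$ its pole along $G_\lambda=0$ has order $i_\lambda+1$: the correction exactly cancels the lift and returns you to the form you started from, so your claim that $\widehat\beta$ ``still has poles of order $\le 1$'' is false, and $\tau$ (hence Lemma \ref{clem}) cannot be applied to it. Since the entire argument hinges on exhibiting a representative of $pr_1^\ast[\beta]$ to which the partition-of-unity chase applies, this is a genuine gap, not a technicality.

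Two further remarks. First, the computation you defer to ``induction on the chase steps'' is where the paper's proof actually lives: the identity rewriting the wedge of the $d(S_\mu/S)$ with the $\lambda$-th factor omitted as $(-1)^\lambda\sum_\mu(-1)^\mu S_\mu\,dS_1\wedge\cdots\wedge\widehat{dS_\mu}\wedge\cdots\wedge dS_k/S^k$, the observation that the resulting local sections agree only up to the sign $(-1)^\lambda$ and hence glue to a global form $\delta(\omega)$, and the congruence $\delta(\Omega_x)\equiv(-1)^{(k-1)(n-1)}(k-1)!\,\frac{G_1\cdots G_k}{S^k}\Omega_x\wedge\Omega_y$ modulo $dx_0\wedge\cdots\wedge dx_n$. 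None of this is routine, and it is what produces the exact constant; asserting that the signs and factorials ``organize into $\Omega_y$'' is not a proof. Second, your observation that the simple-pole lift is not $d_\rightarrow$-closed for $|\ud i|>0$ is correct (in the $t$-coordinates, $d\tilde\beta=k^{|\ud i|}\,d(\ud t^{\ud i})\wedge pr_1^\ast\beta\neq0$), and it applies verbatim to the paper's $\omega$, which is fed into Lemma \ref{clem} without comment; but whatever device repairs this, it cannot be the fiber homotopy operator, precisely because that operator retracts every form onto $pr_1^\ast$ of its restriction and thereby destroys the pole bound you need.
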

\begin{proof}
By the commutativity $pr_1^\ast \circ \delta_{\ud G}\circ (pr_1^\ast)^{-1}=\delta_D$ of the diagram \eqref{cdl} and the fact $(pr_1^*)^{-1}=\sigma_{\ud G}^*$, it suffices to construct a differential form
$
\omega(S,\underline{y}^{\underline{i}}u(\underline{x}))\in\Gamma\left((\mathbf{P}^n\setminus D_{\underline{G}})\times_\Bbbk\mathbb{A}^{k-1},\Omega_{(\mathbf{P}(\CMcal{E})\setminus X_S)/\Bbbk}^n\right)
$
which satisfies
\begin{align}\label{fresult}
\sigma_{\ud G}^\ast ([\omega(S,\underline{y}^{\underline{i}}u(\underline{x})) ])=\frac{(-1)^k(|\ud i|+k-1)!}{i_1! \cdots i_k! (k-1)!k^{|\ud i|}}[\b(S,\ud y^{\ud i}u(\ud x) )], \quad
\delta_D ([\omega(S,\underline{y}^{\underline{i}}u(\underline{x})) ])=[\a(S,\ud y^{\ud i}u(\ud x) )].
\end{align}

We claim that the following differential form
$$
\omega(S,\underline{y}^{\underline{i}}u(\underline{x})):=(-1)^{|\underline{i}|+k-1}\frac{(|\underline{i}|+k-1)!}{(k-1)!}\frac{1}{G_1\cdots G_k}\frac{\underline{y}^{\underline{i}}u(\underline{x})}{S^{|\underline{i}|}}\Omega_x
\in\Gamma\left((\mathbf{P}^n\setminus D_{\underline{G}})\times_\Bbbk\mathbb{A}^{k-1},\Omega_{(\mathbf{P}(\CMcal{E})\setminus X_S)/\Bbbk}^n\right)
$$
serves our purpose. The first equality in \eqref{fresult} clearly holds from the definition \eqref{sigmag} of $\sigma_{\ud G}^*$.
For the second equality, based on Lemma \ref{clem}, we compute $(-d_\rightarrow\tau)^{k-1}\omega$ on $D_+(G_\lambda)\times_\Bbbk\mathbf{A}^{k-1}$: its component is
\begin{align*}
\left((-d_\rightarrow\tau)^{k-1}\omega\right)_\lambda&=(-1)^{k-1}\left(\sum_{\mu_1=1}^kd\left(\frac{S_{\mu_1}}{S}\right)\right)\wedge\cdots\wedge\left(\sum_{\mu_{k-1}=1}^kd\left(\frac{S_{\mu_{k-1}}}{S}\right)\right)\wedge\omega_{\mu_1\cdots\mu_{k-1}\lambda}\\
&=(-1)^{k-1}(k-1)!d\left(\frac{S_1}{S}\right)\wedge\cdots\wedge\widehat{d\left(\frac{S_\lambda}{S}\right)}\wedge\cdots\wedge d\left(\frac{S_k}{S}\right)\wedge\omega_{1\cdots k}.
\end{align*}
From the computation
\begin{align*}
&\quad d\left(\frac{S_1}{S}\right)\wedge\cdots\widehat{d\left(\frac{S_\lambda}{S}\right)}\wedge\cdots\wedge d\left(\frac{S_k}{S}\right)\\
&=\frac{dS_1\wedge\cdots\wedge\widehat{dS_\lambda}\wedge\cdots\wedge dS_k}{S^{k-1}}-\sum_{\mu\neq\lambda}\frac{dS_1\wedge\cdots\wedge\widehat{dS_\lambda}\wedge\cdots\wedge S_\mu dS\wedge\cdots\wedge dS_k}{S^k}\\
&=\frac{S-(S_1+\cdots+\widehat{S}_\lambda+\cdots+S_k)}{S^k}dS_1\wedge\cdots\wedge\widehat{dS_\lambda}\wedge\cdots\wedge dS_k\\
&\quad\quad+\sum_{\mu\neq\lambda}(-1)^{\lambda-\mu}\frac{S_\mu dS_1\wedge\cdots\wedge dS_\lambda\wedge\cdots\wedge\widehat{dS_\mu}\wedge\cdots\wedge dS_k}{S^k}\\
&=(-1)^\lambda\sum_{\mu=1}^k(-1)^\mu\frac{S_\mu dS_1\wedge\cdots\wedge\widehat{dS_\mu}\wedge\cdots\wedge dS_k}{S^k}
\end{align*}
we obtain
\begin{align*}
\left((-d_\rightarrow\tau)^{k-1}\omega\right)_\lambda=(-1)^\lambda(-1)^{k-1}(k-1)!\sum_{\mu=1}^k(-1)^\mu\frac{S_\mu dS_1\wedge\cdots\wedge\widehat{dS_\mu}\wedge\cdots\wedge dS_k}{S^k}\wedge\omega_{1\cdots k}
\end{align*}
Since the local sections agree up to $(-1)^\lambda$, which is the index of $D_+(G_\lambda)$, they glue to define a global section of $\Omega_{(\mathbf{P}(\CMcal{E})\setminus X_S)/\Bbbk}^{n+k-1}$:
\begin{align*}
\delta(\omega)=(-1)^{k-1}(k-1)!\sum_{\lambda=1}^k(-1)^\lambda\frac{S_\mu dS_1\wedge\cdots\wedge\widehat{dS_\lambda}\wedge\cdots\wedge dS_k}{S^k}\wedge\omega
\end{align*}
i.e. if we extend the \v{C}ech complex as usual:
\begin{align*}
\xymatrix{d_\uparrow:\Gamma\left(\mathbf{P}(\CMcal{E})\setminus X_S,\Omega_{(\mathbf{P}(\CMcal{E})\setminus X_S)/\Bbbk}^{n+k-1}\right) \ar[r] & \check{C}^0\left(\mathfrak{U}\times_\Bbbk\mathbf{A}^{k-1},\Omega_{(\mathbf{P}(\CMcal{E})\setminus X_S)/\Bbbk}^{n+k-1}\right) & \psi \ar@{|->}[r] & \left((-1)^\lambda\psi\right)_{\lambda=1,\cdots,k}}
\end{align*}
then $d_\uparrow\delta(\omega)=(-d_\rightarrow\tau)^{k-1}\omega$. By the sheaf property, $d_\uparrow d_\rightarrow\delta(\omega)=-d_\rightarrow d_\uparrow\delta(\omega)=0$ implies that $\delta(\omega)$ is also a de Rham cocycle. Thus $\delta_D([\omega])=[\delta(\omega)]$. 

Note that
\begin{align*}
\delta(\Omega_x)&=(-1)^{k-1}(k-1)!\sum_{\lambda=1}^k(-1)^\lambda\frac{S_\lambda dS_1\wedge\cdots\wedge\widehat{dS_\lambda}\wedge\cdots\wedge dS_k}{S^k}\wedge\Omega_x\\
&\equiv(-1)^{(k-1)(n-1)}(k-1)!\frac{G_1\cdots G_k}{S^k}\Omega_x\wedge\Omega_y\quad \mod dx_0\wedge\cdots\wedge dx_n
\end{align*}
together with the vanishing of $[dx_0\wedge\cdots\wedge dx_n]$ in the cohomology shows that 
\begin{align*}
[\delta(\Omega_x)]=\left[(-1)^{(k-1)(n-1)}(k-1)!\frac{G_1\cdots G_k}{S^k}\Omega_x\wedge\Omega_y\right].
\end{align*}
Therefore, we get
\begin{align*}
\delta\left({\omega(S,\underline{y}^{\underline{i}}u(\underline{x}))}\right)\equiv(-1)^{|\underline{i}|+n(k-1)}(|\underline{i}|+k-1)!\frac{\underline{y}^{\underline{i}}u(\underline{x})}{S^{|\underline{i}|+k}}\Omega_x\wedge\Omega_y \quad \mod dx_0\wedge\cdots\wedge dx_n,
\end{align*}
which proves the second equality in \eqref{fresult}.
\end{proof}

\end{document}